\newtheorem{theorem}{Theorem}
\newtheorem{lemma}[theorem]{Lemma}
\newtheorem{proposition}[theorem]{Proposition}
\newtheorem{definition}[theorem]{Definition}
\newtheorem{remark}[theorem]{Remark}
\newcommand{\dd}{\;{\rm d}}
\newcommand{\ddo}{\;{\rm d}}
\newcommand{\xx}{{\bf x}}
\newcommand{\zz}{{\bf Z}}
\newcommand{\vv}{{\bf v}}
\newcommand{\EE}{{\mathbb E}}
\newcommand{\C}{{\mathcal C}}
\renewcommand{\SS}{{\mathbb S}}
\begin{document}
\title{Topological interactions in a Boltzmann-type framework}

\author[A. Blanchet, P. Degond]{Adrien Blanchet$^1$ \& Pierre Degond$^2$}
\address{$^1$ GREMAQ (IAST/TSE, UT1) -- 21 All\'ee de Brienne 31000 Toulouse, France\\
\email{Adrien.Blanchet@ut-capitole.fr}\\
$^2$ Department of Mathematics, Imperial College London, London SW7 2AZ, United Kingdom\\
\email{p.degond@imperial.ac.uk}\\
}

\date{\today}

\keywords{rank \and topological interaction \and Boltzmann equation}

\subjclass[2000]{MSC 70K45 \and MSC 92D50 \and MSC 91C20}

\maketitle
\begin{abstract}
We consider a finite number of particles characterised by their positions and velocities. At random times a randomly chosen particle, the follower, adopts the velocity of another particle, the leader. The follower chooses its leader according to the proximity rank of the latter with respect to the former. We study the limit of a system size going to infinity and, under the assumption of propagation of chaos, show that the limit equation is akin to the Boltzmann equation. However, it exhibits a spatial non-locality instead of the classical non-locality in velocity space. This result relies on the approximation properties of Bernstein polynomials.
\end{abstract}
\section{Introduction}
In this paper, we explore collective dynamics driven by rank-based
interactions, {\it i.e.} that's to say interactions determined by the
rank of the agents with respect to certain criterion. There are many
examples where such interactions take place. In economics for instance,
it was extensively analysed in~\cite{harsanyi1976cardinal} that agents
are more sensitive to their rank compared to others (salary or wealth
for example) than their own independant cardinal level. To go
further,~\cite{lazear1981rank} studies, in an organisation, compensation
schemes which pay according to an individual's ordinal rank rather than
their output level. Such payoff based on the rank approach also appears
very naturally in a variety of economics applications such as bids, the
labour market, portfolio management, the oil market, academic
production, reputation, etc. 

Evolutionary game theory studies the evolution of strategies/genes
transmitted through natural selection. In chimpanzees as in cockroaches
a group is formed of a dominant male, females and lower order males.
Only the dominant male is supposed to mate with the females. However,
when the dominant male is absent, the females also reproduce with other
males giving the preference to males in descending
order~\cite{dewsbury1982dominance}. It is also known that the rank of an
offspring strongly depends on the rank of its
mother~\cite{horrocks1983maternal}, so that in the replicator dynamic
process the rank increases the chance of reproduction. The study of such
models requires taking into account interactions depending on the rank
of the agents.

In this article we focus on the dynamics of bird flocks. There is a
wide\-spread literature of flocking models where the birds react to
their neighbours as a function of the neighbours' distance from them
within the flock. These are the co-called ``metric'' interactions. In
this context, dynamics based on alignment~\cite{vicsek1995novel},
consensus~\cite{cucker2007emergent} or attraction-repulsion
see~\cite{bertozzi2009blow,bertozzi2007finite} have been widely studied.
However, there has been recent compelling
evidence~\cite{ballerini2008interaction} that interactions within bird
flocks are mostly metric free, as the birds react primarily with a
limited number of their nearest neighbours irrespective of the distances
between them. This observation has motivated the concept of
``topological interaction'', which has been widely echoed in the
scientific
literature~\cite{bode2010limited,camperi2012spatially,cavagna2008starflag,ginelli2010relevance,niizato2011metric}.

Our goal is to investigate the large size limit of a system of agents
interacting through topological interactions. Specifically, we consider
a leader-follower model~\cite{carlen2013kinetic2,carlen2013kinetic1}
where at random times a randomly chosen bird, the follower, decides to
adopt the velocity of another bird, its leader, in the flock. The
follower chooses its leader according to a probability only depending on
the proximity rank of the latter with respect to the former. If we
assume that the probability has a strong cutoff as soon as the proximity
rank exceeds a certain value, of the order of seven in actual flocks,
the considered model is akin to the topological interactions
of~\cite{ballerini2008interaction}.

To our knowledge,~\cite{brenier20092} is the first mathematical work where interaction rules between agents depending on their rank are considered. The closest to our work is~\cite{haskovec2013flocking} where kinetic and hydrodynamic models for topological interactions have been proposed. However, the considered dynamics is different from ours. In \cite{haskovec2013flocking}, it is supposed that an agent's velocity relaxes towards an average velocity of its neighbours where the relative weights of the neighbours depend on their proximity rank to the considered agent. Therefore, it is a model of Cucker-Smale type~\cite{cucker2007emergent} combined with a topological interaction rule. In \cite{haskovec2013flocking}, a mean-field type kinetic model is rigorously derived under some regularisation in the large system size limit and a hydrodynamic model under a monokinetic closure assumption is proposed. 

Here, the interaction rule is different and, in the large system size limit, leads to a Boltzmann type model with an integral operator describing the balance between gains and losses due to the interactions rather than a mean-field model where the interactions are described through a force field. From the mathematical viewpoint, this makes a considerable difference, as an empirical measure approach is not possible. Instead, one has to rely on the propagation of chaos property for the solution of the master equation. In the present work, propagation of chaos is assumed and its proof is defered to future work. Still, under this assumption, the derivation of the kinetic equation is not obvious and as we will see, relies on fine approximation properties of Bernstein polynomials.

Indeed, we will realise that the derivation of a kinetic model requires the estimation of the probability that given two particles say numbered $1$ and~$2$, the rank of $2$ with respect to $1$ be equal to a given integer $j$. Then, the interaction probability of $1$ with $2$ in this configuration is a function $K(j/N)$, where $N$ is the total number of particles and the function $K$ is characteristic of the considered interaction. Thanks to an easy combinatorial estimation, the total probability of $1$ interacting with $2$ is found as  the Bernstein polynomial approximation of $K$ when $N$ is large. Due to some cancellations, the first order correction in powers of $1/N$ of the Bernstein polynomial approximation of $K$ is also needed. This correction can be found in the literature~\cite{lorentz2012bernstein}. 

The paper is organised as follows. In Section~\ref{ndyn}, we present the $N$ particle dynamics and state the main result. In Section~\ref{mastereq}, we derive the master equation of the process and the equation for the first marginal under the assumption of propagation of chaos. In Section~\ref{sec:limit} we precisely state our main result, namely that, in the limit $N \to \infty$ and the assumption of propagation of chaos, the equation for the first marginal reduces to a kinetic equation of Boltzmann type with spatial nonlinearity. To prove this theorem, we use results on Bernstein's polynomial approximation from the literature~\cite{lorentz2012bernstein}. Section \ref{sec:conclusion} offers some considerations on the limit kinetic equation and illustrates our discussion with numerical simulations. Finally, a conclusion is drawn in Section \ref{sec:conclusion2}.

\section{The $N$-particle dynamics}\label{ndyn}
Consider a set of $N$ particles. The particle $i$ is characterised by its position $x_i \in {\mathbb R}^n$ and its velocity $v_i \in {\mathbb R}^n$ where $n\geq 1$ is both the spatial and velocity dimension. For a given particle $i$ we can order the other particles relatively to their distance to $i$. More precisely, we have the following:
\begin{definition}[Rank]
Consider $N$ particles located at $x_1, \ldots, x_N$. Consider the $i$-th particle and order the list $\big(|x_j-x_i|\big)_{j=1, \ldots, N, \, j \not = i}$ by increasing order and denote by $R^N(i,j)\in \{1, \ldots,  N-1\}$ the position of the $j$-th item in this list. If two indices $j$ and $j'$ are such that  $|x_j-x_i| = |x_{j'}-x_i|$, then we choose arbitrarily an ordering between these two numbers. We define $R^N(i,i)=0$. Now, we define \emph{the rank of $j$ with respect to $i$} as: 
$$ r^N(i,j) = \frac{R^N(i,j)}{N-1} \quad \in \quad \bigcup_{k=1}^{N-1} \Big\{ \frac{k}{N-1} \Big\}. $$
\end{definition}
We introduce a function $K$: $r \in [0,1] \mapsto K(r) \in [0,\infty)$ such that 
$$ \int_0^1 K(r) \dd r = 1\;. $$
We define
$$ K^N (r) = \dfrac{K(r)}{\sum_{k=1}^{N-1} K \Big( \frac{k}{N-1} \Big)}\,, $$
in order to have for any $i \in \{1, \ldots, N\}$: 
$$ \sum_{\substack{j=1\\j\not=i}}^N K^N\big(r^N(i,j) \big) = \sum_{k=1}^{N-1} K^N \left( \frac{k}{N-1} \right) = 1\;. $$
In this way, for any $i \in \{1, \ldots, N\}$, the collection $(\pi_{ij})_{j=1, j \not = i}^N$, where 
$$ \pi_{ij}^N = K^N \big( r^N(i,j) \big), $$
defines a discrete probability measure on the set $\{j\in \{1, \ldots, N\}, \, \, j \not = i\}$. 
\medskip

Consider $N$ particles $\{ (x_1(t), v_1(t)), \ldots, (x_N(t), v_N(t)) \}$ which are subject to the following dynamics (previously referred to as the ``Choose the Leader'' dynamics \cite{carlen2013kinetic2,carlen2013kinetic1}): 
\begin{itemize}
\item[-] The dynamics is a succession of free-flights and collisions. 
\item[-] During free-flight, particles follow straight trajectories 
\begin{equation*}
\left\{
  \begin{array}{l}
    \dot x_i = v_i, \vspace{.1cm}\\
 \dot v_i = 0\;.
  \end{array}
\right.
\end{equation*}
\item[-] At Poisson random times with a rate equals to $N$, particles undergo the following collisions process: Pick a particle $i$ in $\{1, \ldots, N\}$ with uniform probability ${1}/{N}$ and perform a collision, \emph{i.e.} pick a collision partner $j$ in the set $\{j\in \{1, \ldots, N\}, \, \, j \not = i\}$ with probability $\pi_{ij}^N$ and perform: 
\begin{equation*}
\left\{
  \begin{array}{l}
	(x_i,x_j) \mbox{ remains unchanged, } \vspace{.1cm}\\
 (v_i,v_j) \mbox{ is changed into } (v_j,v_j). 
  \end{array}
\right.
\end{equation*}
\end{itemize}\medskip 

Since the rank of $j$ with respect to $i$ is an intrinsic property of the positions of the pair of particles and does not depend on how they are numbered, we have the following properties of the rank:
\begin{remark} Let $(x_1, \ldots, x_N)$ be a set on $N$ particles.
  \begin{itemize}
  \item[(i)] The rank $r^N(i,j)$, and hence $\pi_{ij}^N$, is a function of $(x_1, \ldots, x_N)$, \emph{i.e.}  $$r^N(i,j) = r^N(i,j)(x_1, \ldots, x_N)\;.$$ More precisely, we consider the rank $r^N(i,j)$ as a function of $L^\infty({\mathbb R^{nN}})$.
  \item[(ii)] The rank is permutation invariant, {\it i.e.} for any permutation $\sigma \in \mathfrak{S}_N$ where $\mathfrak{S}_N$ denotes the set of permutations of $\{1,\ldots,N\}$, we have
\begin{equation*}
 r^N(\sigma(i), \sigma(j))(x_{\sigma(1)}, \ldots, x_{\sigma(N)}) = r^N(i,j)(x_1, \ldots, x_N). 
\end{equation*}
  \end{itemize}
\label{lem:rank} 
\end{remark}

The aim of this article is to study the limit of this dynamics when the number of particles goes to $\infty$. To do so we will assume that the propagation of chaos property holds true {\it i.e.}
\begin{equation*}
  f^{(N)}(Z_1,\cdots,Z_N, t)=\prod_{\ell=1}^N f^{(1)}_N(Z_\ell,t), \quad \forall \zz \in {\mathbb R}^{2nN}, \quad \forall t \in [0,\infty)\;.
\end{equation*}
Assuming that $f^{(1)}_N \to f$ and $\rho^{(1)}_N:=\int f^{(1)}_N\dd v\to \rho = \int  f \dd v$, then in the limit $N \to \infty$, we will prove that $f$ is a solution of the kinetic equation:
\begin{equation*}
  \frac{\partial f}{\partial t}(x,v)+v\cdot \nabla_x f(x,v)=\rho(x) \int f(x',v)\,K\left( M_\rho (x,|x'-x|)\right)\dd x' - f(x,v), 
\end{equation*}
where $M_\rho$ is the partial mass of $\rho$ and is defined by
\begin{equation*}
  M_\rho(x,s)=\int_{x' \in B(x,s)} \rho(x')\dd x'\;,
\end{equation*}
and where $B(x,s) = \{ y \in {\mathbb R}^n \, | \, |y-x| \leq s \}$ is the ball centered at $x$ and of radius $s>0$. 
\begin{remark}
 The conservation of mass property holds true by Lemma~\ref{lem:change} applied to $H=K$.
\end{remark}
\medskip

In the following section, we derive the master equation for this process, Section \ref{subsec:master}, and the first marginal equation for indistinguishable particles, Section \ref{subsec:firstmarginal}. Then, in Section~\ref{sec:propa}, we derive the master equation under the assumption of propagation of chaos. 
\section{Master equation and propagation of chaos}\label{mastereq}

\subsection{Master equation} \label{subsec:master}
To simplify the notation, when no confusion is possible, we will denote $\xx:=(x_1,\ldots,x_N)$,  $\vv:=(v_1,\ldots,v_N)$, $Z_i:=(x_i,v_i)$, $\zz:=(Z_1,\ldots,Z_N)$ and $\ddo \zz:=\dd x_1 \dd v_1\ldots\dd x_N \dd v_N$.\smallskip

As the collisions occur at Poisson times with rate $N$, the master equation in weak form is, for all test function $\phi^N:\zz \mapsto \phi^N(\zz)$:
\begin{align}
  \label{eq:master}
  &\partial_t \int f^{(N)}(\zz)\,\phi^N(\zz)\ddo \zz - \sum_{i=1}^N \int f^{(N)}(\zz) (v_i \cdot \nabla_{x_i})\,\phi^N(\zz)\dd \zz \notag\\
&=\! N\int \Bigg[ \frac1N\! \sum_{\substack{i,j=1\\j\not=i}}^N  \pi_{ij}^N(\xx)  \,\phi^N(Z_1, \ldots, x_i, v_j , \ldots, x_j, v_j , \ldots Z_N) \notag\\
&\hspace{7cm} -\phi^N(\zz)\Bigg]f^{(N)}(\zz) \ddo \zz\notag\\	
&= N \int\Bigg[ \frac1N\sum_{\substack{i,j=1\\j\not=i}}^N \int \pi_{ij}^N(\xx)\,\phi^N(Z_1, \ldots, x_i, v'_i , \ldots, x_j,v_j, \ldots Z_N)\,\delta(v'_i-v_j)\dd v'_i\notag\\
&\hspace{7cm}-\phi^N(\zz)\Bigg]  f^{(N)}(\zz)\ddo \zz.
\end{align}
By exchanging the notations $v_i$ and $v'_i$ we obtain the following master equation in the strong form:
\begin{equation*}
\partial_t f^{(N)}(\zz) = \sum_{i=1}^N f^{(N)}(\zz) \,(v_i \cdot \nabla_{x_i}) + N L f^{(N)}(\zz) \;,
\end{equation*}
where the operator $L$ is defined by
\begin{multline*}
L f^{(N)}(\zz) :=\\ \frac1N  \sum_{\substack{i,j=1\\i\not=j}}^N  \pi_{ij}^N(\xx)\, \delta(v_i-v_j) \! \int  f^{(N)}(Z_1, \ldots, x_i, v'_i , \ldots Z_N) \dd v'_i-f^{(N)}(\zz).
\end{multline*}

\begin{lemma}[Invariance under permutation]
Define for all $\sigma \in \mathfrak{S}_N$, 
$$\sigma f^{(N)} (\zz) := f^{(N)} (\zz_{\sigma(1)}, \ldots, \zz_{\sigma(N)} ).$$ 
Then we have:
$$ L (\sigma f^{(N)}) = \sigma \big( L f^{(N)} \big) . $$
As a consequence, if $f^{(N)}(t)$ is permutation invariant at time $t=0$, \emph{i.e.} $\sigma f^{(N)}(t)|_{t=0} = f^{(N)}(t)|_{t=0}$ for all $\sigma \in \mathfrak{S}_N$, then it is permutation invariant for all times.
\end{lemma}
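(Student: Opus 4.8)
The plan is to verify directly that the operator $L$ intertwines with the action of $\mathfrak{S}_N$, and then to deduce the persistence of permutation invariance from the structure of the master equation. The only subtle point is that $L$ contains the rank-dependent weights $\pi_{ij}^N(\xx)$, so I must be careful about how these transform under relabelling of the particles; everything else is bookkeeping on dummy indices and the Dirac masses.

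First I would fix $\sigma \in \mathfrak{S}_N$ and write out $L(\sigma f^{(N)})(\zz)$ by substituting the definition of $\sigma f^{(N)}$ into the formula for $L$. The key input is Remark~\ref{lem:rank}(ii): since $\pi_{ij}^N = K^N(r^N(i,j))$ and the rank is permutation invariant, one has $\pi_{ij}^N(\xx) = \pi_{\sigma(i)\sigma(j)}^N(x_{\sigma(1)},\ldots,x_{\sigma(N)})$, equivalently $\pi_{\sigma^{-1}(i)\sigma^{-1}(j)}^N(\xx) = \pi_{ij}^N(x_{\sigma(1)},\ldots,x_{\sigma(N)})$. The Dirac factor $\delta(v_i-v_j)$ is manifestly symmetric, and the inner integral $\int f^{(N)}(Z_1,\ldots,x_i,v_i',\ldots,Z_N)\dd v_i'$ is just an $i$-slot operation. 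Plugging in $\sigma f^{(N)}$ and relabelling the summation indices via $i\mapsto \sigma(i)$, $j\mapsto \sigma(j)$ (which is a bijection of the index set $\{(i,j): i\neq j\}$), the rank-invariance identity lines the weights up correctly, and one reads off $L(\sigma f^{(N)})(\zz) = (\sigma(Lf^{(N)}))(\zz)$. The same relabelling handles the subtracted term $f^{(N)}(\zz)$, which becomes $(\sigma f^{(N)})(\zz) = \sigma(f^{(N)})(\zz)$ trivially.

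Next, for the time-propagation statement, I would note that the transport part of the master equation is also $\mathfrak{S}_N$-equivariant: $\sum_{i=1}^N (v_i\cdot\nabla_{x_i})$ commutes with the $\sigma$-action since relabelling merely permutes the terms of the sum. Hence, if $g^{(N)}(t) := \sigma f^{(N)}(t) - f^{(N)}(t)$, then $g^{(N)}$ satisfies the same linear evolution equation $\partial_t g^{(N)} = \sum_i (v_i\cdot\nabla_{x_i}) g^{(N)} + N L g^{(N)}$, with $g^{(N)}(0) = 0$ by the assumed initial permutation invariance. By uniqueness for this linear (transport plus bounded jump) evolution, $g^{(N)}(t) \equiv 0$, i.e. $f^{(N)}(t)$ remains permutation invariant. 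One could alternatively argue more directly on the mild/Duhamel formulation without invoking an abstract uniqueness theorem.

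I expect the main obstacle — really the only place requiring care rather than routine manipulation — to be the precise tracking of the rank weights $\pi_{ij}^N$ through the index relabelling: one must apply Remark~\ref{lem:rank}(ii) with the correct permutation (and verify that the $i$-slot modification in $\phi^N$ or in the inner integral is the one matching the surviving free index after relabelling). Once that identity is stated cleanly, the rest is a change of dummy variables and the symmetry of $\delta(\cdot)$.
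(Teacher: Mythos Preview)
Your proposal is correct and follows essentially the same approach as the paper: both establish $L(\sigma f^{(N)}) = \sigma(L f^{(N)})$ by substituting $\sigma f^{(N)}$ into $L$, invoking the permutation invariance of the rank (Remark~\ref{lem:rank}(ii)), and reindexing the double sum. For the time-propagation, the paper argues via the semigroup $e^{NLt}$ (so $\sigma^{-1} e^{NLt} \sigma = e^{NLt}$) rather than your uniqueness argument for the linear evolution, but this is a cosmetic difference; your version is arguably cleaner in that it explicitly checks equivariance of the transport part as well.
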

\begin{proof} To emphasise the dependence in $\zz$, we can rewrite the operator $L$ as: 
\begin{eqnarray*}
L f^{(N)} (\zz)=  \frac1N \sum_{\substack{i,j=1\\i\not=j}}^N \pi_{ij}^N(\vec\xx(\zz)) \delta(V_i(\zz) -V_j(\zz))  P_i f^{(N)}(\zz) -f^{(N)}(\zz), 
\end{eqnarray*}
with $\vec\xx(\zz)= \xx$, $ V_i(\zz) = v_i$ and 
$$ P_i f^{(N)}(\zz) = \int  \,f^{(N)}(Z_1, \ldots, x_i, v'_i , \ldots Z_N) \dd v'_i\;. $$
First note that, setting $\sigma \zz=(Z_{\sigma(1)}, \ldots, Z_{\sigma(N)} )$, we have 
$$ V_i (\zz) = V_{\sigma(i)}(\sigma \zz)\,, \quad\mbox{and}\quad  P_i (\sigma f^{(N)}) (\zz) = P_{\sigma(i)} f^{(N)} (\sigma \zz)\;. $$
Therefore by applying the $\sigma^{-1}$ permutation to the double sum and using the permutation invariance of the rank, see Lemma~\ref{lem:rank} (ii), we obtain
\begin{align*}
L \sigma f^{(N)} (\zz) &=  \frac1N \sum_{\substack{i,j=1\\i\not=j}}^N \pi_{ij}^N(\vec \xx(\zz)) \,  \delta(V_i (\zz) -V_j (\zz))  \, P_i (\sigma f^{(N)}) (\zz) \\
&\hspace{3.5cm}-(\sigma f^{(N)})(\zz)\\ 
&=  \frac1N \sum_{\substack{i,j=1\\i\not=j}}^N K^N[r^N(i,j)](\vec \xx(\zz)) \,  \delta(V_i (\zz) -V_j (\zz))  \, P_{\sigma(i)} f^{(N)} (\sigma \zz)\\
&\hspace{3.5cm} -(\sigma f^{(N)})(\zz)\\ 
&=\frac1N \sum_{\substack{i',j'=1\\i'\not=j'}}^N K^N[r^N(\sigma^{-1}(i'),\sigma^{-1}(j'))](\vec \xx(\zz)) \, \delta(V_{\sigma^{-1}(i')} (\zz)\\
&\hspace{3.5cm} -V_{\sigma^{-1}(j')} (\zz)) P_{i'}  f^{(N)} (\sigma\zz) -(\sigma f^{(N)})(\zz)\\ 
& = \frac1N \sum_{\substack{i',j'=1\\i'\not=j'}}^N K^N[r^N(i',j')] (\vec \xx(\sigma\zz) ) \, \delta(V_{i'} (\sigma \zz)\\
&\hspace{3.5cm} -V_{j'} (\sigma\zz)) P_{i'}  f^{(N)} (\sigma\zz) -f^{(N)}(\sigma \zz)\\ 
& = (L f^{(N)}) (\sigma\zz ) = \sigma L \partial_t f^{(N)}  (\zz). 
\end{align*}
The above property states that $\sigma^{-1} L \sigma = L$, for all $\sigma \in \mathfrak{S}_N$. Supposing that $L$ is a bounded operator, we deduce that $\sigma^{-1} L^k \sigma = L^k$, for all $k \in {\mathbb N}$ and consequently $\sigma^{-1} e^L \sigma = e^L$. Now, the solution of the problem $\partial_t f^{(N)} = NLf^{(N)}$ with $f^{(N)}|_{t=0} = f^{(N)}_0$ can be written $f^{(N)}(t) = e^{NLt} f^{(N)}_0$. We deduce that $\sigma f^{(N)}(t) = e^{NLt} \sigma f^{(N)}_0$. Therefore, if $\sigma f^{(N)}_0= f^{(N)}_0$, then $\sigma f^{(N)}(t)= f^{(N)}(t)$, for all $t \geq 0$. If $L$ is not bounded, the same property remains true thanks to an approximation argument. 
\end{proof}
\subsection{First marginal equation for indistinguishable particles}
\label{subsec:firstmarginal}
In the remainder of this article, we will suppose that $f^{(N)}$ is invariant under permutations which physically means that the particles are indistinguishable. This allows us to define the $k$-particle marginal as
\begin{equation}
  \label{eq:assumptionf2}\tag{A1}
  f^{(k)}_N(Z_1, \ldots, Z_k,t)=\int f^{(N)}(Z_1,\cdots,Z_N,t)\dd Z_{k+1} \cdots \dd Z_N .
	\end{equation}
	and $f^{(k)}_N$ is still invariant under permutations of $(Z_1, \ldots, Z_k)$.
\begin{proposition}[First marginal equation for indistinguishable particles]
Assume \eqref{eq:assumptionf2}. For any test functions satisfying 
\begin{equation}
  \label{eq:assumptionphi}
  \phi^N(Z_1,\cdots,Z_N)=\phi(Z_1)\,,
\end{equation}
we have
      \begin{align*}
   \partial_t \int f^{(1)}_N(Z_1)\,\phi(Z_1) \dd Z_1 &=\sum_{i=1}^N \int f^{(1)}_N(\zz) \,(v_i \cdot \nabla_{x_i})\,\phi(\zz)\dd \zz\notag \\
&\quad+ (N-1) \int \pi_{12}^N(\xx) \,\phi(x_1,v_2) f^{(N)} (\zz) \dd \zz \notag \\
	&\quad+ (N-1) \int \pi_{21}^N(\xx) \,\phi(Z_1) f^{(N)} (\zz) \dd \zz \notag \\
&\quad+ (N-1)(N-2) \int \pi_{23}^N(\xx) \,\phi(Z_1) f^{(N)} (\zz) \dd \zz \notag \\
&\quad- N \int \phi(Z_1) \, f^{(1)}_N(Z_1) \dd Z_1. 
\end{align*}    
\label{prop:marginal1}
\end{proposition}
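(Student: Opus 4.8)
The plan is to insert the product test function \eqref{eq:assumptionphi}, $\phi^N(\zz)=\phi(Z_1)$, into the weak master equation \eqref{eq:master}, and then to collapse each of the resulting sums into a single representative term by relabelling the particles, using both the permutation invariance of $f^{(N)}$ and the permutation invariance of the ranks recorded in Remark~\ref{lem:rank}(ii).

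First I would dispose of the two easy contributions. Since $\phi(Z_1)$ depends on $x_i$ only through $i=1$, one has $\nabla_{x_i}\phi(Z_1)=0$ for $i\neq1$, so in the transport sum only the $i=1$ term survives; integrating out $Z_2,\ldots,Z_N$ and invoking \eqref{eq:assumptionf2} turns it into $\int f^{(1)}_N(Z_1)\,(v_1\cdot\nabla_{x_1})\phi(Z_1)\dd Z_1$. Likewise the loss term $-N\int\phi^N(\zz)f^{(N)}(\zz)\dd\zz$ equals $-N\int\phi(Z_1)f^{(1)}_N(Z_1)\dd Z_1$, and the left-hand side becomes $\partial_t\int f^{(1)}_N(Z_1)\phi(Z_1)\dd Z_1$, both directly from the definition of the marginal.

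The heart of the proof is the gain term. Starting from the expression $N\int\big[\tfrac1N\sum_{i\neq j}\pi_{ij}^N(\xx)\,\phi^N(Z_1,\ldots,x_i,v_j,\ldots,x_j,v_j,\ldots,Z_N)\big]f^{(N)}(\zz)\dd\zz$ appearing in \eqref{eq:master}, the prefactor $N$ cancels the $1/N$. Because the indicated argument of $\phi^N$ modifies only the velocity of particle $i$, it equals $\phi(x_1,v_j)$ when $i=1$ and $\phi(Z_1)$ when $i\neq1$, irrespective of $j$. I would therefore split the double sum $\sum_{i\neq j}$ into three disjoint families: (a) $i=1$, $j\in\{2,\ldots,N\}$; (b) $j=1$, $i\in\{2,\ldots,N\}$; (c) $i,j\in\{2,\ldots,N\}$, $i\neq j$. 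In family (a), for each fixed $j$ I apply the change of variables exchanging $Z_2$ and $Z_j$, i.e.\ the transposition $\sigma=(2\,j)\in\mathfrak{S}_N$: $\dd\zz$ is unchanged, $f^{(N)}$ is unchanged by hypothesis, $\phi(x_1,v_j)$ becomes $\phi(x_1,v_2)$, and $\pi^N_{1j}(\xx)=K^N(r^N(1,j))$ becomes $\pi^N_{12}(\xx)$ by Remark~\ref{lem:rank}(ii); hence all $N-1$ summands coincide and family (a) contributes $(N-1)\int\pi^N_{12}(\xx)\,\phi(x_1,v_2)\,f^{(N)}(\zz)\dd\zz$. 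The same relabelling (a transposition $(2\,i)$) applied to family (b) gives $(N-1)\int\pi^N_{21}(\xx)\,\phi(Z_1)\,f^{(N)}(\zz)\dd\zz$, and in family (c) a permutation fixing $1$ and sending $(i,j)$ to $(2,3)$ gives $(N-1)(N-2)\int\pi^N_{23}(\xx)\,\phi(Z_1)\,f^{(N)}(\zz)\dd\zz$. Adding the transport term, the three families, and the loss term is exactly the asserted identity.

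The remaining work is purely computational; the only point that requires care — and the place where the structure of the model genuinely enters — is checking that under a transposition of labels the interaction weight transforms as $\pi_{ij}^N\mapsto\pi_{\sigma(i)\sigma(j)}^N$ on the permuted configuration. This is precisely the content of Remark~\ref{lem:rank}(ii): the rank is a property of the point cloud and not of its labelling. I do not anticipate any real obstacle here; I would only be mindful that the weak form being used already has the velocity delta integrated out, so the relabelling introduces no Jacobian and no measure-theoretic subtlety.
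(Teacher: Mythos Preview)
Your proposal is correct and follows essentially the same route as the paper: the paper also splits the gain sum into the three cases $i=1\neq j$, $j=1\neq i$, and $i,j\geq 2$ with $i\neq j$, and reduces each to a single representative integral by the same transposition-of-labels argument, invoking the permutation invariance of $f^{(N)}$ and of the rank (Remark~\ref{lem:rank}(ii)). The only cosmetic difference is that the paper names the three integrands $A_j^{(1)}$, $A_i^{(2)}$, $A_{i,j}$ before relabelling, whereas you describe the relabelling directly.
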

\begin{proof}
Separating the cases $i=1\neq j$, $j=1\neq i$, and $i\ge2$, $j\ge 2$, the master equation \eqref{eq:master} gives
\begin{multline}
  \label{eq:abc}
   \partial_t \int f^{(N)}(\zz)\,\phi(\zz) \dd \zz =  \sum_{j=2}^N A_{j}^{(1)}+\sum_{i=2}^N A_{i}^{(2)} + \sum_{i=2}^N \sum_{j=2, j \not = i}^N A_{i,j}\\- N\int\phi(x_1,v_1) f^{(N)}(\zz) \dd \zz ,
\end{multline}
with
\begin{align*}
  A_{j}^{(1)}&:=\int \pi_{1j}^N (\xx) \,\phi(x_1,v_j)f^{(N)}(\zz) \dd \zz,\\
 A_{i}^{(2)}&:=\int \pi_{i1}^N (\xx) \,\phi(Z_1)f^{(N)}(\zz) \dd \zz,\\
A_{i,j}&:=\int \pi_{ij}^N(\xx)\, \phi(Z_1)f^{(N)}(\zz) \dd \zz .
\end{align*}
To compute the first term $A_{j}^{(1)}$, we perform the change of variables $Z'_2 = Z_j$ and $Z'_j = Z_2$, which leads to:
\begin{multline*}
A_{j}^{(1)}\!
 = \! \int \pi_{1j}^N(x_1,x'_j, \ldots,x'_2, \ldots ,x_N) \,\phi(x_1,v'_2)\, f^{(N)} (Z_1,Z'_j,\ldots,Z'_2,\ldots, Z_N) \\
\dd Z_1 \dd Z'_j  \ldots \dd Z'_2 \ldots \dd Z_N \,.
\end{multline*}
Using the permutation invariance of the rank (see Lemma~\ref{lem:rank} (ii)), we have
$$\pi_{1j}^N(x_1,x'_j, \ldots,x'_2, \ldots ,x_N) = \pi_{12}^N(x_1,x'_2, \ldots,x'_j, \ldots ,x_N) . $$
Therefore, dropping the primes and using the permutation invariance of $f^{(N)}$, we obtain 
\begin{equation*}
A_{j}^{(1)}=  \int \pi_{12}^N(\xx) \,\phi(x_1,v_2) f^{(N)} (\zz) \dd \zz,  \notag 
\end{equation*}
which does not depend on $j$.

Similarly, we have 
\begin{multline*}
A_{i}^{(2)}
 =  \int \pi_{i1}^N(x_1,x_i, \ldots,x_2, \ldots ,x_N) \,\phi(Z_1)\, f^{(N)} (Z_1,Z_i,\ldots,Z_2,\ldots, Z_N) \\
\dd Z_1 \dd Z_i  \ldots \dd Z_2 \ldots \dd Z_N \, 
\end{multline*}
so that,  using the permutation invariance of the rank, see Lemma~\ref{lem:rank} (ii), and the permutation invariance of $f^{(N)}$ as previously, we obtain 
\begin{equation*}
A_{i}^{(2)}=  \int \pi_{21}^N(\xx) \,\phi(Z_1) f^{(N)} (\zz) \dd \zz \,,
\end{equation*}
which does not depend on $i$.

Also, we have with $i \geq 2$, $j \geq 2$ and $i \not = j$:
\begin{multline*}
A_{i,j}=
 \int \pi_{ij}^N(x_1,x_i, x_j\ldots,x_2, \ldots , x_3, \ldots ,x_N) \,\phi(Z_1)\\
 f^{(N)} (Z_1,Z_i, Z_j,\ldots,Z_2,\ldots, Z_3, \ldots Z_N) \dd Z_1 \dd Z_i  \dd Z_j \ldots \!\dd Z_2 \ldots \!\dd Z_3 \ldots \!\dd Z_N . 
\end{multline*}
Then using the permutation invariance of the rank, see Lemma~\ref{lem:rank} (ii), and the permutation invariance of $f^{(N)}$ as previously, we obtain 
\begin{equation*}
A_{i,j}=  \int \pi_{23}^N(\xx) \,\phi(Z_1) \,f^{(N)} (\zz) \dd \zz \,.
\end{equation*}

For the last term of~\eqref{eq:abc} we obviously have  
$$ \int\phi(Z_1) f^{(N)}(\zz) \dd \zz= \int \phi(Z_1) \, f^{(1)}_N(Z_1) \dd Z_1. $$

Collecting all these identities, we obtain the identity stated in Proposition~\ref{prop:marginal1}. 
\end{proof}
\subsection{Propagation of chaos}\label{sec:propa}
Assume now that the propagation of chaos property holds true {\it i.e.}
\begin{equation}
  \label{eq:assumptionf}\tag{A2}
  f^{(N)}(Z_1,\cdots,Z_N, t)=\prod_{\ell=1}^N f^{(1)}_N(Z_\ell,t), \quad \forall \zz \in {\mathbb R}^{2nN}, \quad \forall t \in [0,\infty)\;,
\end{equation}
and define:
$$\rho^{(1)}_N (x) = \int f^{(1)}_N (x,v) \dd v\;.$$
We remark that $\rho^{(1)}_N$ is a probability density.

We have the following proposition:  
\begin{proposition}[First marginal equation with propagation of chaos]
Assume \eqref{eq:assumptionf}. For any test functions satisfying \eqref{eq:assumptionphi}, we have
\begin{multline}
  \label{eq:master2_poc}
  \partial_t \int f^{(1)}_N(Z_1)\,\phi(Z_1) \dd Z_1 \\
= \sum_{i=1}^N \int f^{(1)}_N(\zz)\, (v_i \cdot \nabla_{x_i})\,\phi(\zz)\dd \zz +  (A^N) + (B^N) + (C^N) + (D^N) ,
\end{multline}    
with
\begin{align*} 
(A^N)&=\frac{1}{S^N(K)} \int \phi(x_1,v_2) \, f^{(1)}_N (Z_1) \, f^{(1)}_N (Z_2) \\
&\hspace{2.5cm} K\left(r^N(1,2)(\xx)\right) \prod_{\ell=3}^N \rho^{(1)}_N(x_\ell) \dd x_\ell \dd Z_1 \dd Z_2, \notag \\
(B^N)&= \frac{1}{S^N(K)}\int \phi(Z_1) \, f^{(1)}_N (Z_1) K\left(r^N(2,1)(\xx)\right)  \prod_{\ell=2}^N \rho^{(1)}_N(x_\ell) \dd x_\ell  \dd Z_1,  \notag  \\
(C^N)&= \frac{N-2}{S^N(K)} \int \phi(Z_1) \, f^{(1)}_N (Z_1)  K\left(r^N(2,3)(\xx)\right)  \prod_{\ell=2}^N \rho^{(1)}_N(x_\ell) \dd x_\ell \dd Z_1,  \notag  \\
(D^N)&=- N \int \phi(Z_1) \, f^{(1)}_N(Z_1) \dd Z_1, 
\end{align*}
where $S^N(K) $ is given by
$$ S^N(K) = \frac{1}{N-1} \sum_{k=1}^{N-1} K\left(\frac{k}{N-1} \right) \;. $$  
\end{proposition}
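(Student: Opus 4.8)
The plan is to insert the factorisation \eqref{eq:assumptionf} into the identity of Proposition~\ref{prop:marginal1} and then, in each of the four collision terms, integrate out exactly those velocity variables which do not appear in the integrand. The first step is purely algebraic: from the definition of $S^N(K)$ one has $\sum_{k=1}^{N-1} K\big(\tfrac{k}{N-1}\big) = (N-1)\,S^N(K)$, so that
\begin{equation*}
\pi_{ij}^N(\xx) = K^N\big(r^N(i,j)(\xx)\big) = \frac{K\big(r^N(i,j)(\xx)\big)}{(N-1)\,S^N(K)}\,.
\end{equation*}
Consequently the combinatorial prefactors in Proposition~\ref{prop:marginal1} collapse onto $S^N(K)$: $(N-1)\pi_{12}^N = K(r^N(1,2))/S^N(K)$, $(N-1)\pi_{21}^N = K(r^N(2,1))/S^N(K)$, and $(N-1)(N-2)\pi_{23}^N = (N-2)\,K(r^N(2,3))/S^N(K)$.

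Next I would treat the four terms one by one. For the first, write $f^{(N)}(\zz)=f^{(1)}_N(Z_1)f^{(1)}_N(Z_2)\prod_{\ell=3}^N f^{(1)}_N(Z_\ell)$ by \eqref{eq:assumptionf}. By Remark~\ref{lem:rank}(i) the rank $r^N(1,2)$ is a function of $(x_1,\dots,x_N)$ only, and $\phi(x_1,v_2)$ depends only on $x_1$ and $v_2$; hence the integrations over $v_3,\dots,v_N$ decouple, each producing a factor $\int f^{(1)}_N(x_\ell,v_\ell)\,\dd v_\ell=\rho^{(1)}_N(x_\ell)$. Together with the prefactor above, this is exactly $(A^N)$. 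The second and third terms are handled identically, the only change being that $\phi(Z_1)$ is independent of $v_2$ as well, so one integrates over $v_2,\dots,v_N$ and collects $\prod_{\ell=2}^N\rho^{(1)}_N(x_\ell)$, yielding $(B^N)$ and $(C^N)$ once the respective prefactors are taken into account. The fourth term is already in the form $(D^N)$, since $\int\phi(Z_1)f^{(N)}(\zz)\,\dd\zz=\int\phi(Z_1)f^{(1)}_N(Z_1)\,\dd Z_1$, and the free-transport term appears unchanged. Adding the contributions gives \eqref{eq:master2_poc}.

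There is no real obstacle in this computation; the only points requiring attention are the bookkeeping of the factors $(N-1)$ and $(N-1)(N-2)$ against the normalising constant $S^N(K)$, and the observation that the rank functions depend on the positions of \emph{all} $N$ particles, so that at this stage only velocity variables — and no position variables — can be integrated out. Performing the reduction of the remaining position integrals $\int K\big(r^N(1,2)(\xx)\big)\prod_{\ell\geq3}\rho^{(1)}_N(x_\ell)\,\dd x_\ell$ and of its analogues, and then passing to the limit $N\to\infty$, is precisely where the fine estimates on Bernstein polynomials are needed, and is carried out in Section~\ref{sec:limit}.
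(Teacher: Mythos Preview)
Your proof is correct and follows exactly the same approach as the paper: insert the factorisation \eqref{eq:assumptionf} into Proposition~\ref{prop:marginal1}, integrate out the velocity variables that do not appear (using that the ranks depend only on positions), and rewrite $K^N = K/((N-1)S^N(K))$ to absorb the combinatorial prefactors. The paper's own proof is in fact more terse than yours, but the content is identical.
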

We note that $S^N(K) $ is the Riemann sum approximation of $\int_0^1 K(r) \dd r$. Since we assume $\int_0^1 K(r) \dd r = 1$, $S^N(K)$ converges to $1$ as $N$ goes to $\infty$.
\begin{proof}
  This result is a direct consequence of Proposition \ref{prop:marginal1}, integrating in $v$ when possible. We then use that
  \begin{equation*}
    K^N=\frac{K}{(N-1)S_N(K)}\;,
  \end{equation*}
to obtain the stated result.
\end{proof}
\section{Limit equation}\label{sec:limit}
For a density $\rho$, define the partial mass of $\rho$ centred in $x$ and of radius $s$ by:
\begin{equation*}
M_\rho (x, s) = \int_{|x-x'| \leq s} \rho(x') \dd x'\;. 
\end{equation*} 
We now state the main theorem of this article:
\begin{theorem}[Limit equation]
Assume \eqref{eq:assumptionf}. If 
$$\lim_{N \to \infty}f^{(1)}_N \to f \quad\mbox{and}\quad \lim_{N \to \infty}\rho^{(1)}_N \to \rho = \int  f \dd v,$$
then, in the limit $N \to \infty$, for all test functions $\phi$ we have:
\begin{multline*}
  \partial_t \int f(\zz) \,\phi(\zz)\dd \zz \\= \int \phi(x_1,v_2) \, f (Z_2)\,\rho(x_1)\,K\left( M_{\rho}(x_1,|x_2-x_1|\right)\dd x_1\dd Z_2 \\-\int \phi(Z_1) \, f (Z_1)\dd Z_1\;,
\end{multline*}
or, in strong form:
\begin{equation*}
  \frac{\partial f}{\partial t}(x,v)+v\cdot \nabla_x f(x,v)=\rho(x) \int f(x',v)\,K\left( M_\rho (x,|x'-x|)\right)\dd x' - f(x,v). 
\end{equation*}
\label{thm:limit}
\end{theorem}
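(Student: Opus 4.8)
The plan is to take the limit $N\to\infty$ in the weak identity \eqref{eq:master2_poc} term by term, with the caveat that $(C^N)$ and $(D^N)$ must be grouped since each diverges. The transport term is the easiest: because $\phi^N(\zz)=\phi(Z_1)$ depends only on $Z_1$, under \eqref{eq:assumptionf} the sum $\sum_i\int f^{(N)}(v_i\cdot\nabla_{x_i})\phi\,d\zz$ reduces to $\int f^{(1)}_N(Z_1)(v_1\cdot\nabla_{x_1})\phi(Z_1)\,dZ_1$, which tends to $\int f(Z_1)(v_1\cdot\nabla_{x_1})\phi(Z_1)\,dZ_1$ by the assumed convergence $f^{(1)}_N\to f$; an integration by parts in $x$ turns this into the transport term $v\cdot\nabla_x f$ of the strong form.

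The gain term $(A^N)$ is where the Bernstein polynomials enter. Fix $x_1,x_2$. Up to the zero-measure event that some $|x_k-x_1|$ equals $|x_2-x_1|$, one has $r^N(1,2)(\xx)=\bigl(1+\#\{k\ge3:\ |x_k-x_1|<|x_2-x_1|\}\bigr)/(N-1)$. Since $x_3,\dots,x_N$ are i.i.d.\ with density $\rho^{(1)}_N$, the cardinality above is a binomial variable of parameters $N-2$ and $p=M_{\rho^{(1)}_N}(x_1,|x_2-x_1|)$, so integrating $x_3,\dots,x_N$ out turns the inner integral into
\[
\sum_{m=0}^{N-2}\binom{N-2}{m}p^m(1-p)^{N-2-m}\,K\!\Bigl(\tfrac{m+1}{N-1}\Bigr),
\]
a shifted Bernstein polynomial of $K$ of degree $N-2$ at $p$. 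Bounding the shift by $\bigl|\tfrac{m+1}{N-1}-\tfrac{m}{N-2}\bigr|\le\tfrac1{N-1}$ via the modulus of continuity of $K$ and using the uniform convergence of Bernstein approximations \cite{lorentz2012bernstein}, this quantity converges to $K(p)$ uniformly in $p\in[0,1]$. Combining with $S^N(K)\to1$, $\rho^{(1)}_N\to\rho$ (hence $p\to M_\rho(x_1,|x_2-x_1|)$), $f^{(1)}_N\to f$, and integrating $v_1$ out of $f^{(1)}_N(Z_1)$ (which enters $(A^N)$ only through $\rho^{(1)}_N(x_1)$, as $\phi(x_1,v_2)$ is independent of $v_1$) gives $(A^N)\to\int\phi(x_1,v_2)\,\rho(x_1)\,f(Z_2)\,K\bigl(M_\rho(x_1,|x_2-x_1|)\bigr)\,dx_1\,dZ_2$.

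For $(B^N)+(C^N)+(D^N)$ the cleanest route is an exact algebraic identity rather than a further asymptotic expansion. For every configuration $\xx$ the map $j\mapsto R^N(2,j)$ is a bijection of $\{1,3,\dots,N\}$ onto $\{1,\dots,N-1\}$, so $\{r^N(2,j):j\neq2\}=\{\tfrac{k}{N-1}:k=1,\dots,N-1\}$ and therefore $\sum_{j\neq2}K\bigl(r^N(2,j)\bigr)=\sum_{k=1}^{N-1}K\bigl(\tfrac{k}{N-1}\bigr)=(N-1)\,S^N(K)$. By the permutation invariance of the rank, Remark~\ref{lem:rank}(ii), and the symmetry of $\prod_{\ell\ge2}\rho^{(1)}_N(x_\ell)\,dx_\ell$, the integral of $K(r^N(2,j))$ against this measure is the same for all $j\in\{3,\dots,N\}$, so $(N-2)\int K(r^N(2,3))\prod_{\ell\ge2}\rho^{(1)}_N\,dx_\ell=\int\sum_{j=3}^NK(r^N(2,j))\prod_{\ell\ge2}\rho^{(1)}_N\,dx_\ell$. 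Adding the $(B^N)$ integrand and using the rank-sum identity gives, exactly, $(B^N)+(C^N)=(N-1)\int\phi(Z_1)f^{(1)}_N(Z_1)\,dZ_1$, whence $(B^N)+(C^N)+(D^N)=-\int\phi(Z_1)f^{(1)}_N(Z_1)\,dZ_1\to-\int\phi(Z_1)f(Z_1)\,dZ_1$. (Alternatively one estimates $(B^N)$ and $(C^N)$ separately via Bernstein expansions of $\mathbb E[K(r^N(2,1))]$ and $\mathbb E[K(r^N(2,3))]$; since the latter carries the prefactor $N-2$, the first-order $1/N$ Bernstein correction \cite{lorentz2012bernstein} must be kept, and one checks it cancels against the Euler--Maclaurin correction $S^N(K)=1+\tfrac{K(1)-K(0)}{2(N-1)}+O(N^{-2})$, leaving the same limit.) Collecting the four limits and undoing the integration by parts gives the weak form, then the strong form, of the announced equation.

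The main obstacle is the coupling of the algebraic cancellation (the $N-2$ and $-N$ prefactors, which make $(C^N)$ and $(D^N)$ individually divergent) with the analytic limits: one must either recognise the exact rank-sum identity above or propagate the first-order Bernstein correction with enough uniformity to control it against the factor $N-2$. A secondary technical point is to upgrade the assumed convergences $f^{(1)}_N\to f$ and $\rho^{(1)}_N\to\rho$ to convergence of the nonlinear functionals $M_{\rho^{(1)}_N}(x_1,|x_2-x_1|)$ and of the products occurring under the integral sign, which requires some equi-integrability or tightness, together with the routine handling of the measure-zero set of distance ties.
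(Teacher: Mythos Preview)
Your argument is correct, and for the loss block $(B^N)+(C^N)+(D^N)$ it is genuinely different from---and cleaner than---the paper's.

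For the transport term and for $(A^N)$ you follow the paper exactly: the binomial/Bernstein identification of $\mathbb E\bigl[K(r^N(1,2))\bigr]$ with a shifted Bernstein polynomial of $K$ at $p=M_{\rho^{(1)}_N}(x_1,|x_2-x_1|)$ is precisely the content of Lemma~\ref{lem:1221} and Proposition~\ref{cor:anbn}.

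Where you diverge is in handling $(B^N)+(C^N)+(D^N)$. The paper treats $(B^N)$ via Bernstein at order $o(1)$ (Proposition~\ref{cor:anbn}) and then, because $(C^N)$ carries the large prefactor $N-2$, pushes the Bernstein expansion of $\mathbb E\bigl[K(r^N(2,3))\bigr]$ to the next order in $1/N$ (Lemma~\ref{lem:1a}), splitting into the two cases $x_1\in B(x_2,|x_2-x_3|)$ and $x_1\notin B(x_2,|x_2-x_3|)$; it then integrates these corrections via the change-of-variables Lemma~\ref{lem:change}, expands $(N-2)/S^N(K)$ using the trapezoidal-rule estimate of $S^N(K)$, and checks that all the $O(1)$ pieces cancel against $(D^N)$ and against the $(B^N)$ limit, leaving $-\int\phi f^{(1)}_N$ plus $o(1)$. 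Your rank-sum identity $\sum_{j\neq2}K\bigl(r^N(2,j)(\xx)\bigr)=(N-1)S^N(K)$, combined with the exchangeability of $x_3,\dots,x_N$ under $\prod_{\ell\ge2}\rho^{(1)}_N\,dx_\ell$, short-circuits all of this: it yields the \emph{exact} equality $(B^N)+(C^N)=(N-1)\int\phi f^{(1)}_N\,dZ_1$ for every $N$, so $(B^N)+(C^N)+(D^N)=-\int\phi f^{(1)}_N\,dZ_1$ without any asymptotics. This is the same identity that underlies the very normalisation $\sum_{j\neq i}\pi^N_{ij}=1$ introduced in Section~\ref{ndyn}, and it makes Lemmas~\ref{lem:1a}--\ref{cor:sncn} and the $1/N$ Bernstein correction unnecessary for the proof of Theorem~\ref{thm:limit}. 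What the paper's longer route buys is an explicit picture of how the $1/N$ corrections organise themselves (the $K'$, $K''$ and $K(0),K(1)$ terms), which could be useful if one later wanted a rate or a next-order correction; your route buys a two-line proof of the leading-order cancellation.
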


This result will be obtained by passing to the limit when $N \to \infty$ in~\eqref{eq:master2_poc}. To pass to the limit in the transport term of~\eqref{eq:master2_poc} is classical and we refer the reader to classical textbooks on the subject. We divide the proof of this theorem in two sections. The first section deals with the two first terms $(A^N)$ and $(B^N)$ of~\eqref{eq:master2_poc} while the second will deal with the last two terms $(C^N)$ and $(D^N)$ of~\eqref{eq:master2_poc}. To do so we will be using the Bernstein polynomial approximation of functions which is a follows:
\begin{proposition}[Bernstein polynomial approximation, \cite{lorentz2012bernstein}]
\label{bernstein}  Let $f$ be a function defined on $[0,1]$. The $n$-th Bernstein polynomial associated with $f$ is defined by
  \begin{equation*}
    B_n(f;x):=\sum_{i=0}^n f\left(\frac{i}{n} \right) {n \choose i} x^i(1-x)^{n-i}\;.
  \end{equation*}
   If $f \in \C^2[0,1]$ then 
\begin{equation*}
  B_n(f;x)=f(x)+\frac{x(1-x)}{2n}f''(x)+o\left(\frac{1}{n}\right)\;.
\end{equation*}
\end{proposition}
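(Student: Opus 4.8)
The plan is to exploit the probabilistic meaning of the Bernstein polynomial. Fix $x \in [0,1]$ and let $S_n$ denote a binomial random variable with parameters $n$ and $x$, so that $\EE[g(S_n)] = \sum_{i=0}^n g(i) \binom{n}{i} x^i (1-x)^{n-i}$ for any $g$. Then $B_n(f;x) = \EE[f(S_n/n)]$, and the strategy is to Taylor expand $f$ to second order around $x$, evaluate the resulting moments of the rescaled variable $Y_n := S_n/n - x$, and show that the Taylor remainder contributes only at order $o(1/n)$.

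First I would record the elementary moment identities. Since $\EE[S_n] = nx$ and $\mathrm{Var}(S_n) = nx(1-x)$, one has $\EE[Y_n] = 0$ and $\EE[Y_n^2] = x(1-x)/n \le 1/(4n)$. I also need the fourth central moment, which follows from the classical binomial formula $\EE[(S_n - nx)^4] = 3n^2 x^2(1-x)^2 + O(n)$, valid uniformly in $x \in [0,1]$; dividing by $n^4$ gives $\EE[Y_n^4] = O(1/n^2)$.

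Next, since $f \in \C^2[0,1]$, Taylor's theorem gives, for all $t \in [0,1]$,
\[
f(t) = f(x) + f'(x)(t-x) + \tfrac12 f''(x)(t-x)^2 + \omega(t,x)(t-x)^2,
\]
where, by continuity of $f''$, the remainder function satisfies $\omega(t,x) \to 0$ as $t \to x$, and where the mean-value form of the remainder shows $\omega$ is uniformly bounded by $M := \max_{[0,1]}|f''|$. Substituting $t = S_n/n$ and taking expectations, the first three terms yield exactly $f(x) + 0 + \tfrac12 f''(x)\,\EE[Y_n^2] = f(x) + \frac{x(1-x)}{2n} f''(x)$, which is the announced leading expansion. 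It remains to prove that the remainder $E_n := \EE[\omega(S_n/n, x)\, Y_n^2]$ is $o(1/n)$.

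The main obstacle is precisely this control of $E_n$, and I would handle it by splitting the expectation according to whether $|Y_n|$ is small or large. Fix $\epsilon > 0$ and choose $\delta > 0$ such that $|\omega(t,x)| \le \epsilon$ whenever $|t-x| \le \delta$. On the event $\{|Y_n| \le \delta\}$ the integrand is bounded by $\epsilon Y_n^2$, contributing at most $\epsilon\, \EE[Y_n^2] \le \epsilon/(4n)$. On the complementary event $\{|Y_n| > \delta\}$ I would use $Y_n^2 \le Y_n^4/\delta^2$ together with $|\omega| \le M$ to bound the contribution by $(M/\delta^2)\,\EE[Y_n^4] = O(1/(\delta^2 n^2))$. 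Combining the two estimates gives $n|E_n| \le \epsilon/4 + O(1/(\delta^2 n))$, hence $\limsup_n n|E_n| \le \epsilon/4$; since $\epsilon$ was arbitrary, $E_n = o(1/n)$, which completes the proof.
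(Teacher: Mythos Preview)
Your proof is correct and follows the classical probabilistic argument via Taylor expansion and moment estimates for the binomial distribution. Note, however, that the paper does not actually prove this proposition: it is stated with a citation to Lorentz's monograph on Bernstein polynomials and used as a black box. Your argument is essentially the standard one found there, so there is no substantive divergence to discuss---you have simply supplied a proof where the paper relies on a reference.

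One small remark on presentation: since $f'' \in \C[0,1]$ is uniformly continuous, the $\delta$ in your splitting can be chosen independently of $x$, which makes the $o(1/n)$ uniform on $[0,1]$. The paper later applies this proposition with $p = M_{\rho^{(1)}_N}(x_1,|x_1-x_2|)$ varying, so uniformity is implicitly needed; it is worth making that explicit.
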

\subsection{Evaluation of $(A^N)$ and $(B^N)$}
\begin{proposition}[Evaluation of $(A^N)$ and $(B^N)$]\label{cor:anbn} Under the assumptions of Theorem~\ref{thm:limit}, we have for large $N$
  \begin{multline*}
S^N(K)\times (A^N)= \\
\int \phi(x_1,v_2) \, \rho^{(1)}_N (Z_1) \, f^{(1)}_N (Z_2) \,  K\left(M_{\rho^{(1)}_N} (x_1, |x_1-x_2|)\right)  \dd x_1 \dd Z_2 +o(1),
  \end{multline*}
and
  \begin{multline*}
S^N(K)\times(B^N)= \\
\int \phi(Z_1) \, f^{(1)}_N (Z_1)  \,  K\left(M_{\rho^{(1)}_N} (x_2, |x_1-x_2|)\right)\rho^{(1)}_N (x_2) \dd Z_1 \dd x_2 +o(1).
  \end{multline*}
\end{proposition}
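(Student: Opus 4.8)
\textbf{Proof proposal for Proposition~\ref{cor:anbn}.}

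The plan is to treat $(A^N)$ and $(B^N)$ in parallel, the only difference being a harmless relabelling of the two distinguished particles. Fix the positions $x_1$ and $x_2$ and work conditionally: the remaining $N-2$ particles $x_3,\ldots,x_N$ are i.i.d.\ with density $\rho^{(1)}_N$. Relative to particle $1$, the rank $r^N(1,2)$ is $R^N(1,2)/(N-1)$, where $R^N(1,2)$ is (up to the tie-breaking convention, which has probability zero) one plus the number of indices $\ell\in\{3,\ldots,N\}$ with $|x_\ell-x_1|<|x_2-x_1|$. Each such event occurs independently with probability $M_{\rho^{(1)}_N}(x_1,|x_2-x_1|)$, so $R^N(1,2)-1$ is Binomial$(N-2,\,p)$ with $p=M_{\rho^{(1)}_N}(x_1,|x_2-x_1|)$. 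The first step is therefore to carry out this combinatorial identification carefully, i.e.\ to write
$$
\int K\!\left(r^N(1,2)(\xx)\right)\prod_{\ell=3}^N \rho^{(1)}_N(x_\ell)\dd x_\ell
=\sum_{m=0}^{N-2} K\!\left(\frac{m+1}{N-1}\right)\binom{N-2}{m} p^{m}(1-p)^{N-2-m}.
$$

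The second step is to recognise the right-hand side as (a shifted version of) the Bernstein polynomial $B_{N-2}(K;p)$ and to invoke Proposition~\ref{bernstein}. Strictly, the argument of $K$ here is $(m+1)/(N-1)$ rather than $m/(N-2)$, so there is a small bookkeeping step: write $(m+1)/(N-1)=m/(N-2)+O(1/N)$ uniformly in $m$, use $K\in\C^2$ to Taylor-expand, and absorb the discrepancy into the $o(1)$; alternatively one re-derives the binomial sum directly as $B_{N-2}(\tilde K;p)$ for $\tilde K_N(r)=K\big((r(N-2)+1)/(N-1)\big)$ which converges to $K$ in $\C^2$. Either way, Proposition~\ref{bernstein} gives that the sum equals $K(p)+O(1/N)$, with the $O(1/N)$ bounded in terms of $\|K''\|_\infty$. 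Plugging this back and recalling $p=M_{\rho^{(1)}_N}(x_1,|x_1-x_2|)$, one obtains
$$
(A^N)=\frac{1}{S^N(K)}\int \phi(x_1,v_2)\,f^{(1)}_N(Z_1)\,f^{(1)}_N(Z_2)\,
\Big[K\!\left(M_{\rho^{(1)}_N}(x_1,|x_1-x_2|)\right)+O(1/N)\Big]\dd x_1\dd Z_2,
$$
and integrating out $v_1$ in the main term (so that $f^{(1)}_N(Z_1)$ becomes $\rho^{(1)}_N(x_1)$, written $\rho^{(1)}_N(Z_1)$ in the statement) yields the claimed formula for $S^N(K)\times(A^N)$. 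The computation for $(B^N)$ is identical after swapping the roles of $1$ and $2$: conditionally on $x_1,x_2$, the rank $r^N(2,1)$ involves a Binomial$(N-2,q)$ with $q=M_{\rho^{(1)}_N}(x_2,|x_1-x_2|)$, and the same Bernstein argument gives $K(q)+O(1/N)$; here one integrates out $v_2$ against $f^{(1)}_N(Z_2)$ to produce $\rho^{(1)}_N(x_2)$.

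The remaining point is to justify that the $O(1/N)$ error, once multiplied by $\phi$, $f^{(1)}_N$, $f^{(1)}_N$ (or $\rho^{(1)}_N$) and integrated, is genuinely $o(1)$. This is where one must be slightly careful rather than purely routine: the Bernstein remainder is controlled by $\|K''\|_\infty$ and is therefore bounded uniformly in $x_1,x_2$, so the error term is bounded by $\frac{C}{N}\|\phi\|_\infty \big(\int f^{(1)}_N\,dZ\big)^2=\frac{C}{N}\|\phi\|_\infty\to 0$, using that $f^{(1)}_N$ is a probability density and $\phi$ is bounded; the prefactor $1/S^N(K)$ is harmless since $S^N(K)\to 1$. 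I expect the main obstacle to be precisely this interplay between the two rates $m/(N-2)$ and $(m+1)/(N-1)$ in the argument of $K$ together with keeping the Bernstein error uniform in the frozen variables $x_1,x_2$ — nothing deep, but it is the step where the $\C^2$ hypothesis on $K$ and the boundedness of the densities are actually used, and it is easy to be sloppy about. Everything else is the combinatorial identification of the rank law with a binomial and a direct appeal to Proposition~\ref{bernstein}.
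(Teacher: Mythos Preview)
Your proposal is correct and follows essentially the same route as the paper: the paper isolates the inner integral as a separate lemma (Lemma~\ref{lem:1221}), gives the same binomial/combinatorial identification of the law of $R^N(1,2)$, handles the shift from $(m+1)/(N-1)$ to $m/(N-2)$ by a first-order expansion of $K$, and then applies the Bernstein approximation of Proposition~\ref{bernstein}. Your treatment is in fact slightly more explicit than the paper's about the uniformity of the $O(1/N)$ remainder in $(x_1,x_2)$ and about integrating out $v_1$ to pass from $f^{(1)}_N(Z_1)$ to $\rho^{(1)}_N(x_1)$.
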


To prove this result we first prove the following:
\begin{lemma}\label{lem:1221} Under the assumptions of Theorem~\ref{thm:limit}, we have for $N$ large,
\begin{equation*}
\int K\left(r^N(1,2)(\xx)\right)  \prod_{\ell = 3}^N \rho^{(1)}_N (x_\ell) \dd x_\ell = K\left(M_{\rho^{(1)}_N} (x_1, |x_1-x_2|)\right) +o(1)\,,
\end{equation*}
and 
\begin{multline*}
\int K\left(r^N(2,1)(\xx)\right) \prod_{\ell = 2}^N \rho^{(1)}_N (x_\ell) \dd x_\ell \\=\int K\left(M_{\rho^{(1)}_N} (x_2, |x_1-x_2|)\right)\rho^{(1)}_N (x_2) \dd x_2+o(1)\;.
\end{multline*}
\end{lemma}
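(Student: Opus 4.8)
The plan is to view the left-hand side of each identity as an expectation over i.i.d.\ samples $x_3, \ldots, x_N$ (resp.\ $x_2, \ldots, x_N$) drawn from $\rho^{(1)}_N$, and to recognize the resulting quantity as a Bernstein polynomial in the partial mass $M_{\rho^{(1)}_N}$, then invoke Proposition~\ref{bernstein}. Concretely, fix $x_1$ and $x_2$. For each $\ell \in \{3, \ldots, N\}$, the event $\{|x_\ell - x_1| \le |x_2 - x_1|\}$ has probability $p := M_{\rho^{(1)}_N}(x_1, |x_2 - x_1|)$ under $x_\ell \sim \rho^{(1)}_N$, independently across $\ell$. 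The rank $R^N(1,2)$ counts how many of the $N-1$ other particles lie at distance $\le |x_2 - x_1|$ from $x_1$; with particle $2$ itself always contributing, $R^N(1,2) = 1 + \#\{\ell \in \{3,\ldots,N\} : |x_\ell - x_1| \le |x_2 - x_1|\}$, so this count follows a $\mathrm{Binomial}(N-2, p)$ law. Hence
\[
\int K\!\left(r^N(1,2)(\xx)\right) \prod_{\ell=3}^N \rho^{(1)}_N(x_\ell)\dd x_\ell = \sum_{i=0}^{N-2} K\!\left(\frac{i+1}{N-1}\right){N-2 \choose i} p^{\,i}(1-p)^{N-2-i},
\]
which is exactly $B_{N-2}(\widetilde{K}; p)$ for the shifted function $\widetilde{K}(t) := K\!\big(\tfrac{(N-2)t + 1}{N-1}\big)$. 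Since $K \in \C^2[0,1]$ (this smoothness must be part of the standing hypotheses; I would state it explicitly), $\widetilde K \to K$ in $\C^2$ uniformly and $\widetilde K(p) = K(p) + O(1/N)$, so Proposition~\ref{bernstein} gives $B_{N-2}(\widetilde K; p) = K(p) + O(1/N) = K\!\left(M_{\rho^{(1)}_N}(x_1,|x_1-x_2|)\right) + o(1)$. Integrating against $\phi$, $f^{(1)}_N(Z_1)$, $f^{(1)}_N(Z_2)$ — which are bounded in $L^1$ and, after the later passage to the limit, converge — the $o(1)$ remainder is controlled uniformly in $(x_1,x_2)$ and disappears; this yields the first identity.

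For the second identity the bookkeeping is slightly different because now $x_2$ plays the role of a free integration variable and the rank is of particle $1$ with respect to particle $2$. Fixing $x_1$ and $x_2$, the rank $R^N(2,1)$ counts particles at distance $\le |x_1 - x_2|$ from $x_2$; here the relevant i.i.d.\ sample is $x_3, \ldots, x_N$ (with $x_1$ contributing deterministically), each lying in $B(x_2, |x_1-x_2|)$ with probability $q := M_{\rho^{(1)}_N}(x_2, |x_1-x_2|)$. The same argument shows $\int K(r^N(2,1))\prod_{\ell=3}^N \rho^{(1)}_N(x_\ell)\dd x_\ell = B_{N-2}(\widetilde K; q) = K(q) + o(1)$; multiplying by the leftover factor $\rho^{(1)}_N(x_2)$ and integrating in $x_2$ produces the claimed right-hand side.

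The main obstacle is controlling the $o(1)$ term \emph{uniformly} in the frozen variables $x_1, x_2$ so that it survives integration against the (only weakly convergent) marginals. The Bernstein error in Proposition~\ref{bernstein} depends on $\|K''\|_\infty$ and on the point $p = M_{\rho^{(1)}_N}(\cdot)$ only through the bounded factor $p(1-p)/2$, so the bound is genuinely uniform in $(x_1,x_2)$ once $K \in \C^2[0,1]$; the extra care is that replacing $\widetilde K$ by $K$ costs another $O(1/N)$ from the shift $(i+1)/(N-1)$ versus $i/(N-2)$, which I would make precise either by a direct Lipschitz estimate on $K$ or by citing the refined remainder in \cite{lorentz2012bernstein}. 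A secondary point is the edge case $|x_1 - x_2|$ large enough that $B(x_2,|x_1-x_2|)$ exhausts the support, i.e.\ $q = 1$: then the binomial degenerates, $R^N(2,1) = N-1$, $r^N(2,1) = 1$, and both sides equal $K(1)$ trivially, so there is nothing to prove there; the genuinely delicate regime is $q$ bounded away from $0$ and $1$, which the Bernstein estimate handles.
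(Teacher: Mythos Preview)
Your proof is correct and follows essentially the same route as the paper: interpret the integral as the expectation of $K(r^N(1,2))$ when $x_3,\ldots,x_N$ are drawn i.i.d.\ from $\rho^{(1)}_N$, observe that $R^N(1,2)-1$ is $\mathrm{Binomial}(N-2,p)$ with $p=M_{\rho^{(1)}_N}(x_1,|x_2-x_1|)$, rewrite the resulting sum as a Bernstein polynomial in $p$ (up to the shift $(i+1)/(N-1)$ versus $i/(N-2)$, handled by a Lipschitz estimate), and apply Proposition~\ref{bernstein}. Your packaging via the auxiliary function $\widetilde K(t)=K\big(((N-2)t+1)/(N-1)\big)$ and your explicit remarks on uniformity of the $o(1)$ in $(x_1,x_2)$ are in fact slightly more careful than the paper's presentation, but the argument is the same.
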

\begin{proof} We first give a combinatorial interpretation of the rank and then use it to interpret the terms of the statement as expectation.

\smallskip\noindent$\bullet$  Let us fix $x_1$ and $x_2$. The rank $r^N(1,2)$ is equal to the number of points $x_3, \ldots, x_N$ belonging to the ball $B = B(x_1,|x_2-x_1|) = \{ x \, : \, |x-x_1| \leq |x_2-x_1| \}$ plus one unit, scaled by the factor $N-1$, {\it i.e.} 
$$  r^N(1,2) (\xx) = \frac{ \# \{ j \in \{3, \ldots, N\} \, : \, x_j \in B \} +1 }{N-1}. $$
Denote $P_R$ be the probability such that $R^N(1,2) = R$ where $R^N(1,2) = (N-1)  \,r^N(1,2) $. To have $R^N (1,2)= R$, we have to choose $R-1$ particles amongst $N-2$ to lie in $B$. 
The probability that one of the $R-1$ particles belongs to $B$ is equal to 
$$p:= M_{\rho^{(1)}_N} (x_1, |x_2-x_1|)\,,$$
while the probability that one of the $N-2 - (R-1)$ remaining particles lies in ${\mathbb R}^n \setminus B$ is $1-p$. Therefore, 
\begin{equation}
  \label{eq:pr}
  P_R = {N-2 \choose R-1} \, p^{R-1} \, (1-p)^{N-2-(R-1)}\;.
\end{equation}

\smallskip\noindent$\bullet$ Now, $x_1$ and $x_2$ being fixed, the quantity
$$\int K\left(r^N(1,2)(\xx)\right) \, \prod_{\ell = 3}^N \rho^{(1)}_N (x_\ell) \dd x_\ell, $$
can be interpreted as the expectation of $K(r^N(1,2))$ when $N-2$ points $x_3, \ldots, x_N$ are drawn according to independent identically distributed probabilities with density $\rho^{(1)}_N (x) \dd x$. It will be denoted ${\mathbb E} \{ K(r^N(1,2)(\xx)) \}
$.

By~\eqref{eq:pr}, we compute
\begin{align*}
{\mathbb E} \left\{ K\left(r^N(1,2)(\xx)\right) \right\}&= \sum_{R=1}^{N-1} K\left(\frac{R}{N-1} \right) {N-2 \choose R-1} \, p^{R-1} \, (1-p)^{N-2-(R-1)}\\
&= \sum_{R=0}^{M} K\left(\frac{R+1}{M+1} \right) {M \choose R} \, p^R \, (1-p)^{M-R}\;,
\end{align*}
with $M = N-2$. Since, for $N$ large, $K\left({(R+1)}/{(M+1)} \right) = K\left({R}/{M} \right)+o(1)$ (remarking that $R/M \leq 1$),
\begin{equation*}
{\mathbb E} \left\{ K\left(r^N(1,2)(\xx)\right) \right\}= \sum_{R=0}^{M} K\left(\frac{R}{M} \right) {M \choose R} \, p^R \, (1-p)^{M-R}+o(1)\;.
\end{equation*}
Using Bernstein's approximation, Proposition~\ref{bernstein}, we obtain
\begin{equation*}
{\mathbb E} \left\{ K\left(r^N(1,2)(\xx)\right) \right\}= K(p)+o(1)\;.
\end{equation*}
Which is the first statement.

\smallskip\noindent$\bullet$ The identity
\begin{equation*}
 \int  K\left(r^N(2,1)(\xx)\right)  \prod_{\ell = 3}^N \rho^{(1)}_N (x_\ell) \dd x_\ell =K\left(M_{\rho^{(1)}_N} (x_1, |x_2-x_1|)\right)+o(1), 
\end{equation*}
is obtained in an analogous way by exchanging the role of~$1$ and~$2$. We then have to integrate by $\rho^{(1)}_N (x_2) \dd x_2$ to obtain the stated result.
\end{proof}
\begin{proof}[of Proposition~\ref{cor:anbn}]
Inserting the expressions of Lemma~\ref{lem:1221} in $(A^N)$ and $(B^N)$ we readily obtain the stated result.
\end{proof}
\subsection{Evaluation of $(C^N) + (D^N)$}
\begin{proposition}[Evaluation of $(C^N) + (D^N)$]\label{lem:cd} Under the assumptions of Theorem~\ref{thm:limit}, we have
  \begin{multline*}
(C^N)+(D^N)=-\int \phi(Z_1) \, f^{(1)}_N (Z_1)\dd Z_1 \\-\int \phi(Z_1) \, f^{(1)}_N (Z_1)\,\rho^{(1)}_N(x_2)\,K\left( M_{\rho^{(1)}_N}(x_2,|x_1-x_2|\right)\dd x_2 \dd Z_1+o(1).
\end{multline*}
\end{proposition}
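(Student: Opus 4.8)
The plan is to expand $(C^N)$ up to an error $o(1)$ and then add $(D^N)$, which is explicit. The reason this requires more care than the estimates of $(A^N)$ and $(B^N)$ in Lemma~\ref{lem:1221} is the prefactor $N-2$ in $(C^N)$: it amplifies an $O(1/N)$ correction to the inner expectation into an $O(1)$ contribution, so the leading-order Bernstein approximation is no longer sufficient and one must retain the first-order term of Proposition~\ref{bernstein}. I would begin, as in the proof of Lemma~\ref{lem:1221}, by fixing $Z_1=(x_1,v_1)$ and reading $r^N(2,3)$ combinatorially: freezing $x_2,x_3$ and drawing $x_4,\dots,x_N$ i.i.d.\ of density $\rho^{(1)}_N$, one has $R^N(2,3)=1+q+b$, where $q:=\mathds{1}\{|x_1-x_2|\le|x_3-x_2|\}\in\{0,1\}$ records whether the \emph{deterministic} point $x_1$ lies in $B(x_2,|x_3-x_2|)$, and $b$ is a $\mathrm{Binomial}(N-3,p)$ variable with $p:=M_{\rho^{(1)}_N}(x_2,|x_3-x_2|)$. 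The only new feature compared with Lemma~\ref{lem:1221} is the presence of $x_1$ in the pool of points ranked against $x_2$. Writing $M:=N-3$, this gives
\[
\EE\big\{K(r^N(2,3))\mid x_1,x_2,x_3\big\}=\sum_{b=0}^{M}K\Big(\frac{b+1+q}{M+2}\Big)\binom{M}{b}p^b(1-p)^{M-b}.
\]

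Next I would Taylor-expand $K\big(\tfrac{b+1+q}{M+2}\big)$ around $\tfrac{b}{M}$, using $\tfrac{b+1+q}{M+2}-\tfrac{b}{M}=\tfrac{(1+q)-2b/M}{M+2}$ and the fact that, since $K\in\C^2$, the remainder is $O(1/M^2)$ uniformly. Taking the expectation turns the sum into $B_M(K;p)+\tfrac{1}{M+2}\big[(1+q)\,B_M(K';p)-2\,B_M(g;p)\big]+O(1/M^2)$ with $g(r):=rK'(r)$; then Proposition~\ref{bernstein} for $B_M(K;p)$, together with the uniform convergence $B_M(h;\cdot)\to h$ for $h\in\C[0,1]$ applied to $K'$ and $g$, yields, uniformly in $(x_1,x_2,x_3)$,
\[
\EE\big\{K(r^N(2,3))\mid x_1,x_2,x_3\big\}=K(p)+\frac1M\Big[\frac{p(1-p)}{2}K''(p)+(1+q-2p)K'(p)\Big]+o\!\Big(\frac1M\Big).
\]

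Then I would integrate in $x_2,x_3$ against $\rho^{(1)}_N$. The change-of-variables identity (Lemma~\ref{lem:change}), which sends $x_3\mapsto u:=M_{\rho^{(1)}_N}(x_2,|x_3-x_2|)\in[0,1]$ and turns $q$ into $\mathds{1}\{u\ge p_1\}$ with $p_1:=M_{\rho^{(1)}_N}(x_2,|x_1-x_2|)$, together with integrations by parts, reduces the relevant one-dimensional integrals to $\int_0^1K=1$, $\int_0^1\tfrac{u(1-u)}{2}K''(u)\dd u=\tfrac{K(0)+K(1)-2}{2}$ and $\int_0^1(1+\mathds{1}\{u\ge p_1\}-2u)K'(u)\dd u=2-K(0)-K(p_1)$. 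Summing these and integrating in $x_2$ (total mass $1$) gives
\[
\int\!\!\int\EE\{K(r^N(2,3))\}\,\rho^{(1)}_N(x_2)\rho^{(1)}_N(x_3)\dd x_2\dd x_3=1+\frac1N\Big[\frac{K(1)-K(0)}{2}+1-J_N(x_1)\Big]+o\!\Big(\frac1N\Big),
\]
where $J_N(x_1):=\int K\big(M_{\rho^{(1)}_N}(x_2,|x_1-x_2|)\big)\rho^{(1)}_N(x_2)\dd x_2$. I would combine this with the Euler--Maclaurin expansion $S^N(K)=1+\tfrac{K(1)-K(0)}{2(N-1)}+O(1/N^2)$, valid because $K\in\C^2$ and $\int_0^1K=1$, whence $\tfrac{N-2}{S^N(K)}=(N-2)-\tfrac{K(1)-K(0)}{2}+O(1/N)$. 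Multiplying the last two displays and integrating against $\phi(Z_1)f^{(1)}_N(Z_1)$, the two $\tfrac{K(1)-K(0)}{2}$ terms cancel --- this is the cancellation announced in the introduction --- and one is left with $(C^N)=(N-1)\int\phi(Z_1)f^{(1)}_N(Z_1)\dd Z_1-\int\phi(Z_1)f^{(1)}_N(Z_1)\,\rho^{(1)}_N(x_2)\,K\big(M_{\rho^{(1)}_N}(x_2,|x_1-x_2|)\big)\dd x_2\dd Z_1+o(1)$. Adding $(D^N)=-N\int\phi(Z_1)f^{(1)}_N(Z_1)\dd Z_1$ yields the statement.

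I expect the main obstacle to be the bookkeeping of the three distinct $O(1/N)$ contributions which, after amplification by $N-2$, produce the two non-trivial terms of the conclusion: (i) the first-order Bernstein correction $\tfrac{p(1-p)}{2M}K''(p)$; (ii) the shift incurred by replacing the argument $\tfrac{b+1+q}{N-1}$ of $K$ by $\tfrac{b}{M}$, which also carries the dependence on the fixed point $x_1$ through $q$; and (iii) the Riemann-sum correction in $S^N(K)$. All three must be kept, and one has to verify that everything except the two advertised terms cancels. A secondary, more routine point is to ensure that the $o(1/M)$ in the Bernstein expansion and the $O(1/M^2)$ Taylor remainder are uniform in $(x_1,x_2,x_3)$ --- which follows from $K\in\C^2$ and compactness of $[0,1]$ --- so that they survive integration against $\phi\, f^{(1)}_N\,\rho^{(1)}_N\,\rho^{(1)}_N$ and multiplication by $N-2$; a dominated-convergence argument then delivers the stated identity.
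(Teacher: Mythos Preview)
Your proof is correct and follows essentially the same route as the paper: combinatorial interpretation of $r^N(2,3)$ with $x_1$ playing a special role, Taylor expansion of the shifted argument combined with the first-order Bernstein correction, integration in $x_3$ via Lemma~\ref{lem:change}, and the trapezoidal/Euler--Maclaurin expansion of $S^N(K)$ to produce the cancellation of the $\tfrac{K(1)-K(0)}{2}$ terms. The only organizational difference is that the paper splits the computation into the two cases $x_1\in B(x_2,|x_2-x_3|)$ and $x_1\notin B(x_2,|x_2-x_3|)$ (Lemma~\ref{lem:1a}) and then recombines them (Lemma~\ref{cor:sncn}), whereas you carry the indicator $q$ through a single unified expansion; this is a cleaner bookkeeping device but not a different argument.
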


Like in the previous section, to evaluate $(C^N)$ we first transform the term in parenthesis:
\begin{lemma}\label{lem:1a} Under the assumptions of Theorem~\ref{thm:limit}, we have for large $N$
  \begin{multline*}
\int K\left(r^N(2,3)(\xx)\right)  \prod_{\ell=4}^N \rho^{(1)}_N(x_\ell) \dd x_\ell \\
\hspace{-1.5cm} = K(p_{23})- \frac{K'(p_{23})}{N} \big(1-\chi_{B(x_2,|x_2-x_3|)}(x_1)\big) \\+\frac{1}{N} \left[ \frac{p_{23}(1-p_{23})}{2}K''(p_{23})+2(1-p_{23})K'(p_{23})\right]+ o\left(\frac{1}{N}\right), 
\end{multline*}
where $p_{23}=M_{\rho^{(1)}_N} (x_2, |x_2-x_3|)$ only depends on $x_2$ and $x_3$ and $$\chi_{B(x_2,|x_2-x_3|)}(x_1)=
\left\{
  \begin{array}{ll}
    1 \quad&\mbox{if $x_1 \in B(x_2,|x_2-x_3|)$}\\
0\quad&\mbox{otherwise.}
  \end{array}
\right.
$$
\end{lemma}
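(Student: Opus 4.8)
The plan is to mimic the combinatorial argument of Lemma~\ref{lem:1221}, but now keeping track of the $O(1/N)$ correction, and to handle the subtle point that the rank $r^N(2,3)$ counts particles in the ball $B(x_2,|x_2-x_3|)$ \emph{including possibly $x_1$}, which is not one of the integration variables $x_4,\ldots,x_N$. First I would fix $x_1$, $x_2$, $x_3$ and observe that $R^N(2,3) = 1 + \chi_{B}(x_1) + \#\{j\in\{4,\ldots,N\} : x_j \in B\}$, where $B = B(x_2,|x_2-x_3|)$ and $p_{23} = M_{\rho^{(1)}_N}(x_2,|x_2-x_3|)$. Thus, conditionally on the value $\varepsilon := \chi_B(x_1) \in \{0,1\}$, the integer $R^N(2,3) - 1 - \varepsilon$ is Binomial$(N-3, p_{23})$-distributed when $x_4,\ldots,x_N$ are drawn i.i.d.\ with density $\rho^{(1)}_N$. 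Hence
\begin{equation*}
\int K\!\left(r^N(2,3)(\xx)\right) \prod_{\ell=4}^N \rho^{(1)}_N(x_\ell)\dd x_\ell = \sum_{R=0}^{M} K\!\left(\frac{R+1+\varepsilon}{M+2}\right){M\choose R} p_{23}^R (1-p_{23})^{M-R},
\end{equation*}
with $M = N-3$, which I will then analyse asymptotically.

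The next step is to reduce the argument $\frac{R+1+\varepsilon}{M+2}$ to $\frac{R}{M}$ with an explicit first-order correction. Writing $\frac{R+1+\varepsilon}{M+2} = \frac{R}{M} + \frac{1}{M}\big((1+\varepsilon) - \frac{R}{M}\cdot\frac{M+2}{M+2}\cdot 2 + \ldots\big)$ more carefully, I expand $\frac{R+1+\varepsilon}{M+2} - \frac{R}{M} = \frac{M(1+\varepsilon) - 2R}{M(M+2)}$ and Taylor-expand $K$ around $R/M$ to first order, so the sum splits into $B_M(K;p_{23})$ plus a correction of the form $\frac{1}{M}\sum_R K'(R/M)\big((1+\varepsilon) - 2R/M\big){M\choose R}p_{23}^R(1-p_{23})^{M-R} + o(1/N)$. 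In the correction term, to the required order I may replace $K'(R/M)$ by $K'(p_{23})$ and $R/M$ by its mean $p_{23}$ (the fluctuations are $O(M^{-1/2})$ and contribute $o(1/N)$ after the $1/M$ prefactor, once one checks the second-moment bound), yielding $\frac{1}{N} K'(p_{23})\big((1+\varepsilon) - 2p_{23}\big) + o(1/N)$. Finally I apply Proposition~\ref{bernstein} to get $B_M(K;p_{23}) = K(p_{23}) + \frac{p_{23}(1-p_{23})}{2M}K''(p_{23}) + o(1/M)$, and replace $1/M$ by $1/N$ up to $o(1/N)$. Collecting terms, with $\varepsilon = \chi_B(x_1)$, the coefficient of $1/N$ becomes $\frac{p_{23}(1-p_{23})}{2}K''(p_{23}) + K'(p_{23})(1+\chi_B(x_1) - 2p_{23})$; rewriting $1 + \chi_B(x_1) - 2p_{23} = 2(1-p_{23}) - (1 - \chi_B(x_1))$ gives exactly the stated expression.

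The main obstacle I anticipate is the rigorous justification that, inside the $O(1/N)$ correction term, one may freely replace $K'(R/M)$ and $R/M$ by their values at the mean $p_{23}$ without generating an error worse than $o(1/N)$ — this requires controlling $\EE[(R/M - p_{23})^2]$ and $\EE[|K'(R/M) - K'(p_{23})|\,]$ against the Binomial law, which is standard but needs the $\C^2$ (or slightly more) regularity of $K$ and a uniform bound; one should also be a little careful that $p_{23}$ can be arbitrarily close to $0$ or $1$, though since $K\in\C^2[0,1]$ the bounds are uniform. A secondary, lighter obstacle is bookkeeping the precise index shifts ($M = N-3$, the extra $+1$ from the definition $R^N \ge 1$, and the $+\varepsilon$), so that the final substitution of $1/M$ by $1/N$ is clean; since these differ only at order $1/N^2$ after multiplying by the $O(1/N)$ coefficients, they are absorbed in the $o(1/N)$ remainder. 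The uniformity of all remainders in $x_1,x_2,x_3$ (needed later when this lemma is integrated against $\phi$, $f^{(1)}_N$ and $\rho^{(1)}_N$ in Proposition~\ref{lem:cd}) follows because $p_{23}\in[0,1]$ and the error estimates depend on $K$ only through $\|K\|_{\C^2}$.
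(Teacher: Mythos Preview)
Your proposal is correct and follows essentially the same approach as the paper: the probabilistic interpretation of the integral as an expectation over i.i.d.\ draws of $x_4,\ldots,x_N$, the identification of $R^N(2,3)-1-\varepsilon$ as Binomial$(N-3,p_{23})$, the first-order Taylor expansion of $K$ at $R/M$, and the use of Proposition~\ref{bernstein} for the leading term are exactly what the paper does. The only cosmetic differences are that the paper treats the two cases $\varepsilon=0$ and $\varepsilon=1$ separately and then merges them at the end, whereas you carry the parameter $\varepsilon=\chi_B(x_1)$ throughout; and the paper handles the $1/M$ correction sum by applying Bernstein a second time to the function $r\mapsto(1+\varepsilon-2r)K'(r)$, while you argue directly via the variance bound --- both yield $o(1/N)$ for the same reason (continuity of $K'$ suffices, since the prefactor already supplies the $1/N$).
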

\begin{proof}
Similarly to the proof of Lemma~\ref{lem:1221}, we interpret the quantity
\begin{equation*}
  \int K\left(r^N(2,3)(\xx)\right)  \prod_{\ell=4}^N \rho^{(1)}_N(x_\ell) \dd x_\ell , 
\end{equation*}
as the expectation of $K\left(r^N(2,3)(\xx)\right)$ when the $N-4$ points $x_4, \ldots, x_N$ are drawn according to independent identically distributed probabilities with density $\rho^{(1)}_N (x) \dd x$. Two cases have to be distinguished:\smallskip\\
\smallskip\noindent$\bullet$ {\bf First case: if $x_1 \in B(x_2,|x_2-x_3|)$ --} Like in the proof of Lemma~\ref{lem:1221} we have
\begin{equation*}
  r^N(2,3)=\frac{ \# \{ j \in \{4 \ldots, N\} \, |:\, x_j \in B(x_2,|x_2-x_3|) \} +2 }{N-1}.
\end{equation*}
Hence, setting $p_{23}=M_{\rho^{(1)}_N} (x_2, |x_2-x_3|)=:p$,
\begin{align*}
 \EE \left\{ K\left(r^N(2,3)(\xx)\right)\right\} &= \sum_{R=2}^{N-1} K\left(\frac{R}{N-1} \right) {N-3 \choose R-2} \, p^{R-2} \, (1-p)^{N-3-(R-2)}\\
&=\sum_{R=0}^{M} K\left(\frac{R+2}{M+2} \right) {M \choose R} \, p^{R} \, (1-p)^{M-R}, 
\end{align*}
with $M=N-3$. By expanding $K$, we have, uniformly with respect to $R \in \{0,\cdots,M\}$
\begin{equation*}
  K\left(\frac{R+2}{M+2} \right)=K\left(\frac{R}{M} \right)+\frac{2}{M}\left(\frac{M-R}{M+2} \right)K'\left(\frac{R}{M} \right) + o\left(\frac{1}{M} \right)\;.
\end{equation*}
Since $(M-R)/(M+2)=1-M/R + o(1)$ we obtain uniformly with respect to $R \in \{0,\cdots,M\}$
\begin{equation*}
  K\left(\frac{R+2}{M+2} \right)=K\left(\frac{R}{M} \right)+\frac{2}{M}\left(1-\frac{R}{M} \right)K'\left(\frac{R}{M} \right) + o\left(\frac{1}{M} \right)\;.
\end{equation*}
So, we obtain
\begin{multline*}
 \EE \left\{ K\left(r^N(2,3)(\xx)\right)\right\} =\sum_{R=0}^{M} K\left(\frac{R}{M} \right) {M \choose R} \, p^{R} \, (1-p)^{M-R} \\
+\frac{2}{M}\sum_{R=0}^{M}\left(1-\frac{R}{M} \right) K'\left(\frac{R}{M} \right) {M \choose R} \, p^{R} \, (1-p)^{M-R}+ o\left(1 \right).
\end{multline*}
Using Bernstein's approximation, Proposition~\ref{bernstein}, to $K$ and to $p \mapsto (1-p)K'(p)$ we obtain
\begin{align}
 \EE \left\{ K\left(r^N(2,3)(\xx)\right)\right\} &=K(p)+ \frac{p(1-p)}{2M}K''(p)  +\frac{2(1-p)}{M}K'(p)+ o\left(1 \right)\notag\\
&=K(p)+\frac{2(1-p)}{N}K'(p) + \frac{p(1-p)}{2N}K''(p)+ o\left(1\right). \label{eq:samep1}
\end{align}

\smallskip\noindent$\bullet$ {\bf Second case: if $x_1 \notin B(x_2,|x_2-x_3|)$ --} In this case, 
\begin{equation*}
  r^N(2,3)=\frac{ \# \{ j \in \{4 \ldots, N\} \, : \, x_j \in B(x_2,|x_2-x_3|) \} +1 }{N-1}.
\end{equation*}
Following the same step as before we compute, with $p=M_{\rho^{(1)}_N} (x_2, |x_2-x_3|)$
\begin{equation*}
 \EE \left\{K\left(r^N(2,3)(\xx)\right)\right\} = \sum_{R=1}^{N-2} K\left(\frac{R}{N-1} \right) {N-3 \choose R-1} \, p^{R-1} \, (1-p)^{N-3-(R-1)}\;,
\end{equation*}
which we rewrite
\begin{equation*}
 \EE \left\{\left(r^N(2,3)(\xx)\right)\right\} =\sum_{R=0}^{M} K\left(\frac{R+1}{M+2} \right) {M \choose R} \, p^{R} \, (1-p)^{M-R}, 
\end{equation*}
with $M=N-3$. By expanding $K$, we have
\begin{equation*}
  K\left(\frac{R+1}{M+2} \right)=K\left(\frac{R}{M} \right)+\frac{1}{M}\left(1-\frac{2R}{M} \right)K'\left(\frac{R}{M} \right) + o\left(\frac{1}{M} \right)\;.
\end{equation*}
So, we have 
\begin{multline*}
 \EE \left\{\left(r^N(2,3)(\xx)\right)\right\} =\sum_{R=0}^{M} K\left(\frac{R}{M} \right) {M \choose R} \, p^{R} \, (1-p)^{M-R} \\
+\frac{1}{M}\sum_{R=0}^{M}\left(1-\frac{2R}{M} \right) K'\left(\frac{R}{M} \right) {M \choose R} \, p^{R} \, (1-p)^{M-R}+ o\left(1\right).
\end{multline*}
Using Bernstein's approximation (see Proposition~\ref{bernstein}), we obtain
\begin{align}
 \EE \left\{\left(r^N(2,3)(\xx)\right)\right\} &=K(p)+ \frac{p(1-p)}{2M}K''(p)+\frac{1-2p}{M}K'(p)+ o\left(1 \right)\notag\\
&\hspace{-2cm}=K(p)+\frac{2(1-p)}{N}K'(p)- \frac{K'(p)}{N}+\frac{p(1-p)}{2N}K''(p)+o\left(1\right). \label{eq:samep}
\end{align}

\smallskip\noindent$\bullet$ We obtain the result stated in Lemma \ref{lem:1a} by noticing that Expression~\eqref{eq:samep} is equal to the sum of Expression~\eqref{eq:samep1} and an extra term $-K'(p)/N$.
\end{proof}
\begin{lemma}[Evaluation of $S^N(K)\times(C^N)/(N-2)$]\label{cor:sncn}
  Under the assumptions of Theorem~\ref{thm:limit}, we have
\begin{multline*}%
  \frac{S^N(K)}{N-2}\times (C^N)= \left(1+\frac1N+\frac{K(1)-K(0)}{2N}  \right)\int \phi(Z_1) \, f^{(1)}_N (Z_1)\dd Z_1\\
- \frac1N\int \phi(Z_1) \, f^{(1)}_N (Z_1)\,\rho_N^{(1)}(x_2)\,K\left( M_{\rho_N^{(1)}}(x_2,|x_2-x_3|\right)\dd x_2 \dd Z_1 +o(1).
\end{multline*}
\end{lemma}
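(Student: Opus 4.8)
The plan is to unfold $(C^N)$ from the definition recalled above. Since
\[
\frac{S^N(K)}{N-2}\,(C^N)=\int\phi(Z_1)\,f^{(1)}_N(Z_1)\,\Psi_N(x_1)\dd Z_1,\qquad \Psi_N(x_1)=\int\Big(\int K\big(r^N(2,3)(\xx)\big)\prod_{\ell=4}^N\rho^{(1)}_N(x_\ell)\dd x_\ell\Big)\,\rho^{(1)}_N(x_2)\,\rho^{(1)}_N(x_3)\dd x_2\dd x_3,
\]
the inner integral is precisely the quantity expanded in Lemma~\ref{lem:1a}. Substituting that expansion decomposes $\Psi_N(x_1)$ into four pieces: the leading piece carrying $K(p_{23})$, the piece carrying $-\frac1N K'(p_{23})\big(1-\chi_{B(x_2,|x_2-x_3|)}(x_1)\big)$, the piece carrying $\frac1N\big[\frac{p_{23}(1-p_{23})}{2}K''(p_{23})+2(1-p_{23})K'(p_{23})\big]$, and a remainder. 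Because the error in Lemma~\ref{lem:1a} descends from the Voronovskaya estimate of Proposition~\ref{bernstein}, which for $K\in\C^2[0,1]$ is uniform in the argument over $[0,1]$, the remainder is $o(1/N)$ uniformly in $(x_1,x_2,x_3)$; integrated against $\phi\,f^{(1)}_N$ and the two probability densities $\rho^{(1)}_N$ it contributes only $o(1/N)$ (in particular $o(1)$), which is the precision that will be needed to conclude Proposition~\ref{lem:cd}.

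For the three genuine pieces I would invoke repeatedly the change-of-variables identity of Lemma~\ref{lem:change}: for $x_2$ fixed and $x_3$ drawn from $\rho^{(1)}_N$, the quantity $p_{23}=M_{\rho^{(1)}_N}(x_2,|x_2-x_3|)$ is uniformly distributed on $[0,1]$, so $\int H(p_{23})\,\rho^{(1)}_N(x_3)\dd x_3=\int_0^1 H(r)\dd r$ for any continuous $H$. Applied to $H=K$ this equals $\int_0^1 K=1$, and since $\int\rho^{(1)}_N(x_2)\dd x_2=1$ the leading piece of $\Psi_N$ is identically $1$, so it contributes exactly $\int\phi(Z_1)f^{(1)}_N(Z_1)\dd Z_1$. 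For the $K''$/$K'$ piece I would apply the identity to $H(r)=\frac{r(1-r)}{2}K''(r)$ and to $H(r)=2(1-r)K'(r)$ and then integrate by parts (using $\int_0^1 K=1$ and $K\in\C^2$), obtaining $\int_0^1\frac{r(1-r)}{2}K''(r)\dd r=\frac{K(1)+K(0)}{2}-1$ and $\int_0^1(1-r)K'(r)\dd r=1-K(0)$; the combination $1+\frac{K(1)-3K(0)}{2}$ is independent of $x_2$, so this piece contributes $\frac1N\big(1+\frac{K(1)-3K(0)}{2}\big)\int\phi f^{(1)}_N\dd Z_1$.

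The delicate piece is the one carrying the indicator. The point is that, once $x_1$ and $x_2$ are frozen, $1-\chi_{B(x_2,|x_2-x_3|)}(x_1)=\chi_{\{|x_2-x_3|<|x_1-x_2|\}}$ is a cutoff on $x_3$ alone, and since $s\mapsto M_{\rho^{(1)}_N}(x_2,s)$ is nondecreasing and continuous (spheres carry no $\rho^{(1)}_N$-mass), this cutoff agrees, up to a $\rho^{(1)}_N$-null set, with $\{p_{23}<q\}$ where $q:=M_{\rho^{(1)}_N}(x_2,|x_1-x_2|)$. A truncated form of Lemma~\ref{lem:change} then gives $\int K'(p_{23})\big(1-\chi_{B(x_2,|x_2-x_3|)}(x_1)\big)\rho^{(1)}_N(x_3)\dd x_3=\int_0^{q}K'(r)\dd r=K(q)-K(0)$, and integrating in $x_2$ against $\rho^{(1)}_N$ (once more using total mass $1$) turns this piece into $-\frac1N\int\phi(Z_1)f^{(1)}_N(Z_1)\,\rho^{(1)}_N(x_2)\,K\big(M_{\rho^{(1)}_N}(x_2,|x_1-x_2|)\big)\dd x_2\dd Z_1+\frac{K(0)}{N}\int\phi f^{(1)}_N\dd Z_1$ (here $|x_1-x_2|$ is the radius, consistently with Proposition~\ref{lem:cd}). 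Adding the three contributions, the coefficients of $\frac1N\int\phi f^{(1)}_N$ sum to $K(0)+\big(1+\frac{K(1)-3K(0)}{2}\big)=1+\frac{K(1)-K(0)}{2}$, which together with the leading $\int\phi f^{(1)}_N$ and the surviving $x_2$-integral is exactly the asserted identity, with total coefficient $1+\frac1N+\frac{K(1)-K(0)}{2N}$. I expect the main obstacle to be precisely this indicator piece: recognising it as a one-variable cutoff on $x_3$, justifying the truncated change of variables with the correct upper limit $q$, and then carefully tracking all the $O(1/N)$ contributions so that they recombine into the stated coefficient.
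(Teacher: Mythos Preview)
Your proposal is correct and follows essentially the same route as the paper: substitute the expansion of Lemma~\ref{lem:1a} into $(C^N)$, then reduce every term via Lemma~\ref{lem:change} and integration by parts, with a uniform $o(1/N)$ remainder. The only cosmetic difference is in bookkeeping: the paper first rewrites $2(1-p)K'(p)=(1-2p)K'(p)+K'(p)$ and absorbs the extra $K'(p)/N$ into the indicator term, turning $-\tfrac{1}{N}K'(p)(1-\chi)$ into $+\tfrac{1}{N}K'(p)\chi$; it then interprets $\chi_{B(x_2,|x_2-x_3|)}(x_1)=1$ as $x_3\notin B(x_2,|x_1-x_2|)$ and integrates over the complement (picking up $K(1)-K(q)$), whereas you keep $(1-\chi)$ and integrate over the ball (picking up $K(q)-K(0)$). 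The boundary contributions $K(1)$ versus $K(0)$ are exactly compensated by the corresponding difference between the paper's $\int_0^1(1-2p)K'\dd p$ and your $\int_0^1 2(1-p)K'\dd p$, so both routes assemble the same coefficient $1+\tfrac{1}{N}+\tfrac{K(1)-K(0)}{2N}$. You are also right that the radius in the surviving integral should be $|x_1-x_2|$, matching Proposition~\ref{lem:cd}; the $|x_2-x_3|$ in the displayed statement is a typo.
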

\begin{proof}
  Using Lemma~\ref{lem:1a} and separating the cases $x_1 \in B(x_2,|x_2-x_3|)$ and $x_1 \not \in B(x_2,|x_2-x_3|)$, we can write
 \begin{equation*}
 \frac{S^N(K)}{N-2}\times(C^N)
= (1)+(2)+ o\left(1\right),
 \end{equation*}
where, writing $p$ for $p_{23}$, i.e. $p=M_{\rho^{(1)}_N} (x_2, |x_2-x_3|)$, we have:
\begin{multline*}
(1)=\frac1N\int_{x_1 \in B(x_2,|x_2-x_3|)}\!\!\!\!\!\!\!\!\! \phi(Z_1) \, f^{(1)}_N (Z_1) K'(p)\,\rho_N^{(1)}(x_2)\,\rho_N^{(1)}(x_3)\dd x_2 \dd x_3 \dd Z_1 \\
 + o\left( \frac{1}{N} \right), 
\end{multline*}
and
\begin{multline*}
  (2)=\int \phi(Z_1) \, f^{(1)}_N (Z_1) \left(K(p)+\frac{1-2p}{N}K'(p)+\frac{p(1-p)}{2N}K''(p)\right)\\
\rho_N^{(1)}(x_2)\,\rho_N^{(1)}(x_3)\dd x_2 \dd x_3 \dd Z_1 + o\left( \frac{1}{N} \right). 
\end{multline*}

\smallskip\noindent$\bullet$ For the term $(1)$, we first notice that $x_1 \in B(x_2,|x_2-x_3|)$ is equivalent to saying that $x_3 \notin B(x_2,|x_1-x_2|)$ so that, for $p=M_{\rho^{(1)}_N} (x_2, |x_2-x_3|)$, 
\begin{eqnarray*}
(1) &=& \frac1N\int_{x_3 \notin B(x_2,|x_1-x_2|)} \phi(Z_1) \, f^{(1)}_N (Z_1) K'(p)\,\rho_N^{(1)}(x_2)\,\rho_N^{(1)}(x_3)\dd x_2 \dd x_3 \dd Z_1
\\	
&& \hspace{9cm} + o\left( \frac{1}{N} \right) 
\;.
\end{eqnarray*}
By the change of variable stated in Lemma~\ref{lem:change} and applied to $H=K'$, $\rho=\rho_N^{(1)}$, $x=x_2$, and $r= |x_1-x_2|$ we have
\begin{multline*}
\int_{x_3 \notin B(x_2,|x_1-x_2|)} K'\left(M_{\rho^{(1)}_N} (x_2, |x_2-x_3|)\right)\rho_N^{(1)}(x_3)\dd x_3 \\=K(1)-K\left( M_{\rho_N^{(1)}}(x_2,|x_1-x_2|\right)\;.
\end{multline*}
Inserting this in $(1)$ we obtain
\begin{multline}\label{eq:a1}
  N\times (1)=
K(1)\int \phi(Z_1) \, f^{(1)}_N (Z_1)\dd Z_1 \int \rho_N^{(1)}(x_2) \dd x_2\\-  \int \phi(Z_1) \, f^{(1)}_N (Z_1)\,\rho_N^{(1)}(x_2)\,K\left( M_{\rho_N^{(1)}}(x_2,|x_1-x_2|\right)\dd x_2 \dd Z_1 + o(1)\\
\hspace{-4cm} = K(1)\int \phi(Z_1) \, f^{(1)}_N (Z_1)\dd Z_1\\-  \int \phi(Z_1) \, f^{(1)}_N (Z_1)\,\rho_N^{(1)}(x_2)\,K\left( M_{\rho_N^{(1)}}(x_2,|x_1-x_2|\right)\dd x_2 \dd Z_1 + o(1). 
\end{multline}

\smallskip\noindent$\bullet$ Using again the change of variable result of Lemma~\ref{lem:change} together with integration by parts, we compute:
\begin{multline*}
  \int \Big(K(p)+ \frac{1-2p}{N}K'(p)+\frac{p(1-p)}{2N}K''(p)\Big) \rho_N^{(1)}(x_3)\dd x_3  \\
=\int_0^1 \big( K(\tilde p)+\frac{1-2\tilde p}{N}K'(\tilde p)+\frac{\tilde p(1-\tilde p)}{2N}K''(\tilde p)\big) \, d\tilde p\\
= 1+\frac1N-\frac{K(0)+K(1)}{2N}\;,
\end{multline*}
where $p=M_{\rho^{(1)}_N} (x_2, |x_2-x_3|)$.
From this and Lemma \ref{lem:1a}, we deduce:
\begin{equation}
\label{eq:a2}
(2) = \left(1+\frac1N-\frac{K(0)+K(1)}{2N}\right)\int \phi(Z_1) \, f^{(1)}_N (Z_1) \dd Z_1 + o \left( \frac{1}{N} \right) . 
\end{equation}

\smallskip\noindent$\bullet$ Combining the two terms~\eqref{eq:a1} and~\eqref{eq:a2}, we obtain the result stated in Lemma \ref{cor:sncn}. 
\end{proof}
We are now ready to prove Proposition~\ref{lem:cd}
\begin{proof}[of Proposition~\ref{lem:cd}] The proof is divided in two main steps.

\noindent$\bullet$ We first have
\begin{align*}
  S^N(K)&=\frac{1}{N-1} \sum_{k=1}^{N-1}K \left( \frac{k}{N-1}\right)\\
&= \frac{K(1)-K(0)}{2(N-1)} + \frac{1}{N-1} \left(\frac{K(0)+K(1)}{2}+\sum_{k=1}^{N-2}K \left( \frac{k}{N-1}\right) \right)\;.
\end{align*}
In the second term of this expression, we recognize the approximation of $\int_0^1K(s)\dd s$ by the trapezoidal rule. As the trapezoidal rule is second order, it leads to
\begin{align*}
  S^N(K) &= \frac{K(1)-K(0)}{2(N-1)}  + \int_0^1K(s)\dd s +o\left( \frac1N\right) \\
&= \frac{K(1)-K(0)}{2N}  + 1 +o\left( \frac1N\right)\;.
\end{align*}
As a consequence
 \begin{align}\label{eq:par}
   \frac{N-2}{S^N(K)}&=N \frac{1-2/N}{1+(K(1)-K(0))/2N + o(1/N)}\notag\\
&=N-\frac{K(1)-K(0)}{2} -2+o(1)\;.
 \end{align}

\noindent $\bullet$ Now collecting the estimate of Corollary~\ref{cor:sncn} and~\eqref{eq:par} we obtain
\begin{align*}
  (C^N)=& \frac{N-2}{S^N(K)} \left[(1)+(2)\right]\\
=&\left(N- \frac{K(1)-K(0)}{2} -2\right)\int \phi(Z_1) \, f^{(1)}_N (Z_1)\dd Z_1 \\
&+ \left(1+\frac{K(1)-K(0)}{2} \right)\int \phi(Z_1) \, f^{(1)}_N (Z_1)\dd Z_1 \\
& - \int \phi(Z_1) \, f^{(1)}_N (Z_1)\,\rho_N^{(1)}(x_2)\,K\left( M_{\rho_N^{(1)}}(x_2,|x_1-x_2|\right)\dd x_2 \dd Z_1 \\
&+ o\left(1\right) \;.
\end{align*}
And as
\begin{equation*}
  (D^N)=-N\int \phi(Z_1) \, f^{(1)}_N (Z_1)\dd Z_1 ,
\end{equation*}
we obtain the statement of Proposition~\ref{lem:cd}. 
\end{proof}
\subsection{Proof of Theorem~\ref{thm:limit}}
We have to pass to the limit in~\eqref{eq:master2_poc}. By Propositions~\ref{cor:anbn} and~\ref{lem:cd} we have
\begin{align*}
  &\partial_t \int f^{(1)}_N(Z_1)\,\phi(Z_1) \dd Z_1 - \sum_{i=1}^N \int f^{(1)}_N(\zz)\, (v_i \cdot \nabla_{x_i})\,\phi(\zz)\dd \zz \\
&=\left( \frac{1}{S^N(K)} -1 \right)\!\int\! \phi(Z_1) \, f^{(1)}_N (Z_1)  \,  K\left(M_{\rho^{(1)}_N} (x_2, |x_1\!-\!x_2|)\right)\rho^{(1)}_N (x_2) \dd Z_1 \dd x_2\\
&\quad+\frac{1}{S^N(K)}\int \phi(x_1,v_2) \, \rho^{(1)}_N (Z_1) \, f^{(1)}_N (Z_2) \,  K\left(M_{\rho^{(1)}_N} (x_1, |x_1\!-\!x_2|)\right)  \dd x_1 \dd Z_2 \\
&\quad -\int \phi(Z_1) \, f^{(1)}_N (Z_1)\dd Z_1 +o(1) .
\end{align*} 
As $N$ goes to $\infty$, the second line goes to $0$ since $S^N(K) $ is the Riemann sum approximation of $\int_0^1 K(r) \dd r= 1$. The convergence in the other terms is formally obvious and leads to the stated result.

\section{Discussion}\label{sec:conclusion}
\subsection{Large-time behaviour}
Consider a function homogeneous in space $(t,x,v) \mapsto G(t,v)$. Since $\int K=1$, by Lemma~\ref{lem:change}, we get
\begin{equation*}
  \frac{\partial G}{\partial t}(v)=-v\cdot \nabla_x G(v)+G(v)\int K\left( M_\rho (x,|x'-x|)\right)\dd x' - G(v)=0\;.
\end{equation*}
Hence any function homogeneous in space $(t,x,v) \mapsto G(t,v)$ is a stationary solution. 
Moreover, on a periodic spatial domain, we can expect that any solution converges at large-times toward a function of this type. The proof of such a claim is left to future work. 
\subsection{Discrete versus continuous approach}
We can wonder if the large-time and large number of particles limits permute. It does not seem the case. Indeed, the number of distinct velocities decreases when there is a finite number of particles while, as discussed in the previous section, the distribution of velocities remains constant in time in the case of a continuum of particles.

In the case of a finite number of particles the consensus in the direction the particles adopt is longer and longer to obtain, see Figures~\ref{fig:10},~\ref{fig:20} and~\ref{fig:70}.

%

\begin{figure}[h!]
 \begin{minipage}[t]{.45\linewidth}
\centering\epsfig{figure=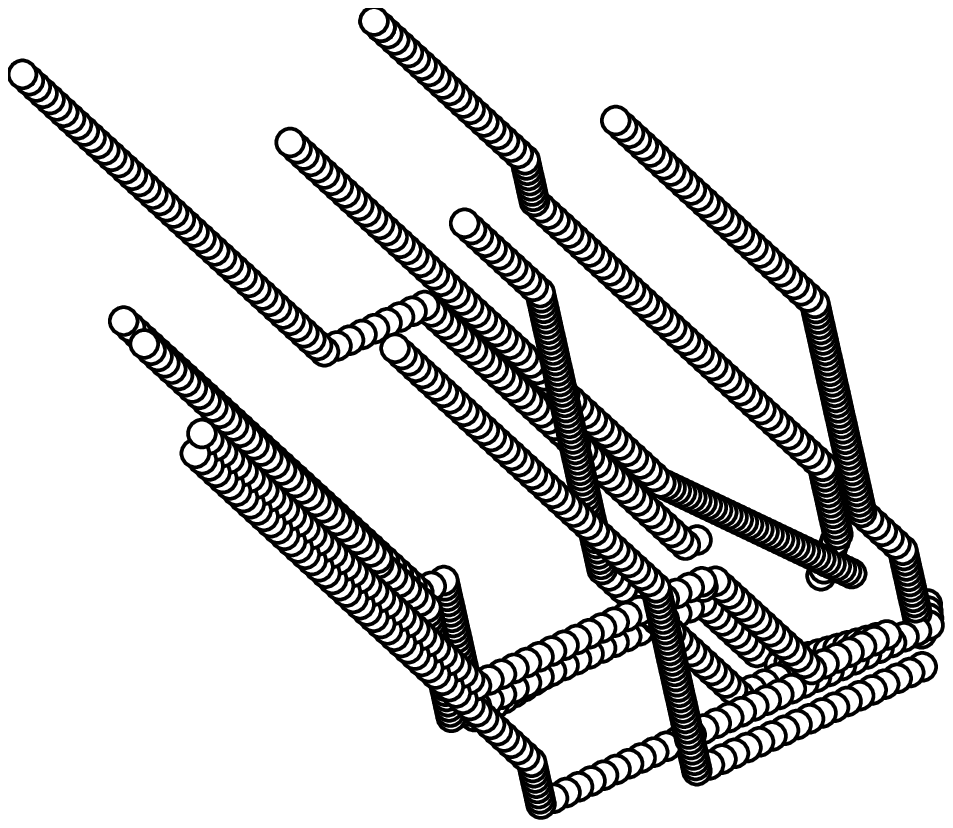,width=6cm}
 \end{minipage} \hfill
\begin{minipage}[t]{.45\linewidth}
\centering\epsfig{figure=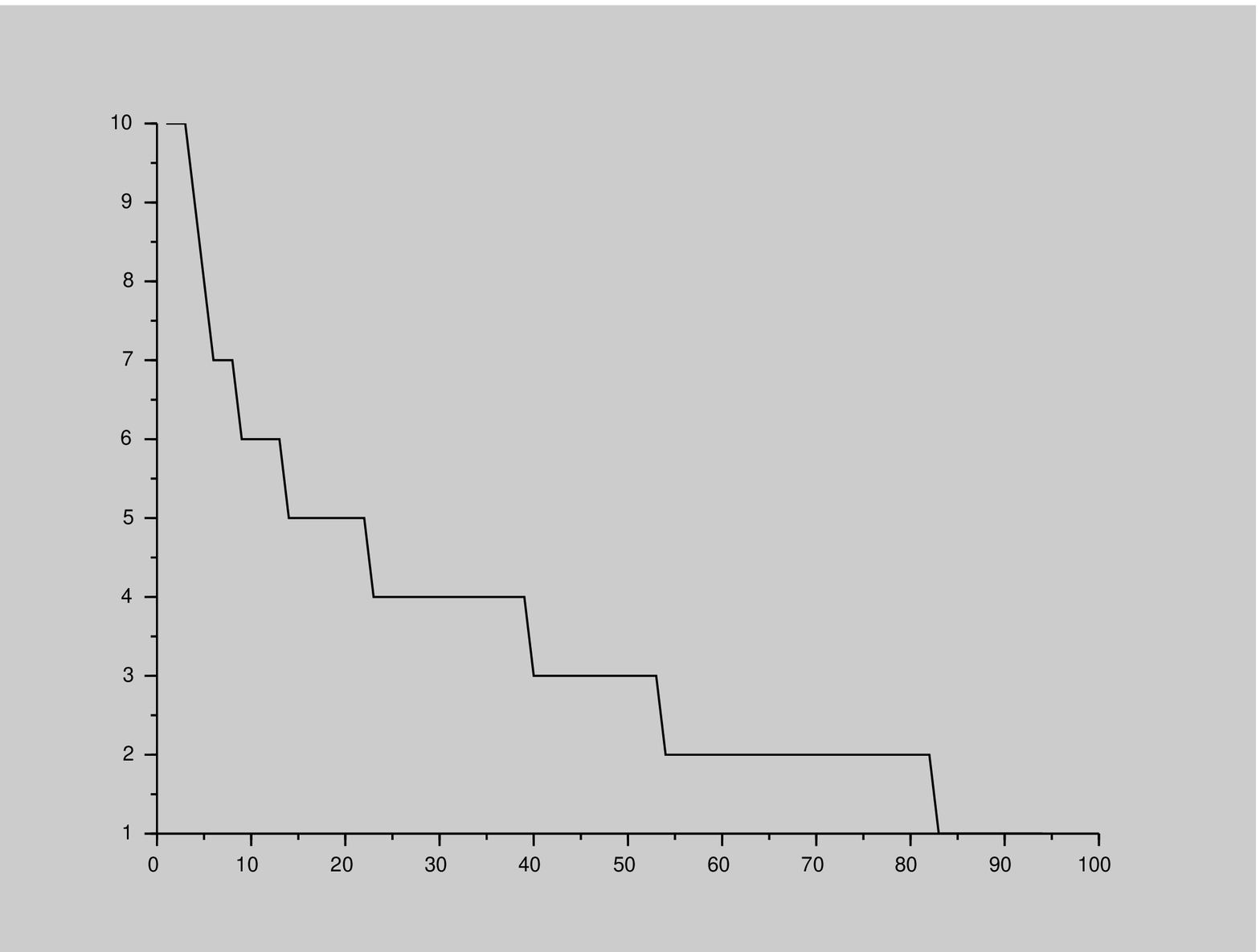,width=6cm}
 \end{minipage} 
\caption{\small Trajectories of the particles on the left, variance and number of different speed as functions of time in the case of 10 particles taken randomly in $[-10,10]$ for the position and for the speed.}\label{fig:10}
\end{figure}
\begin{figure}[h!]
 \begin{minipage}[t]{.45\linewidth}
\centering\epsfig{figure=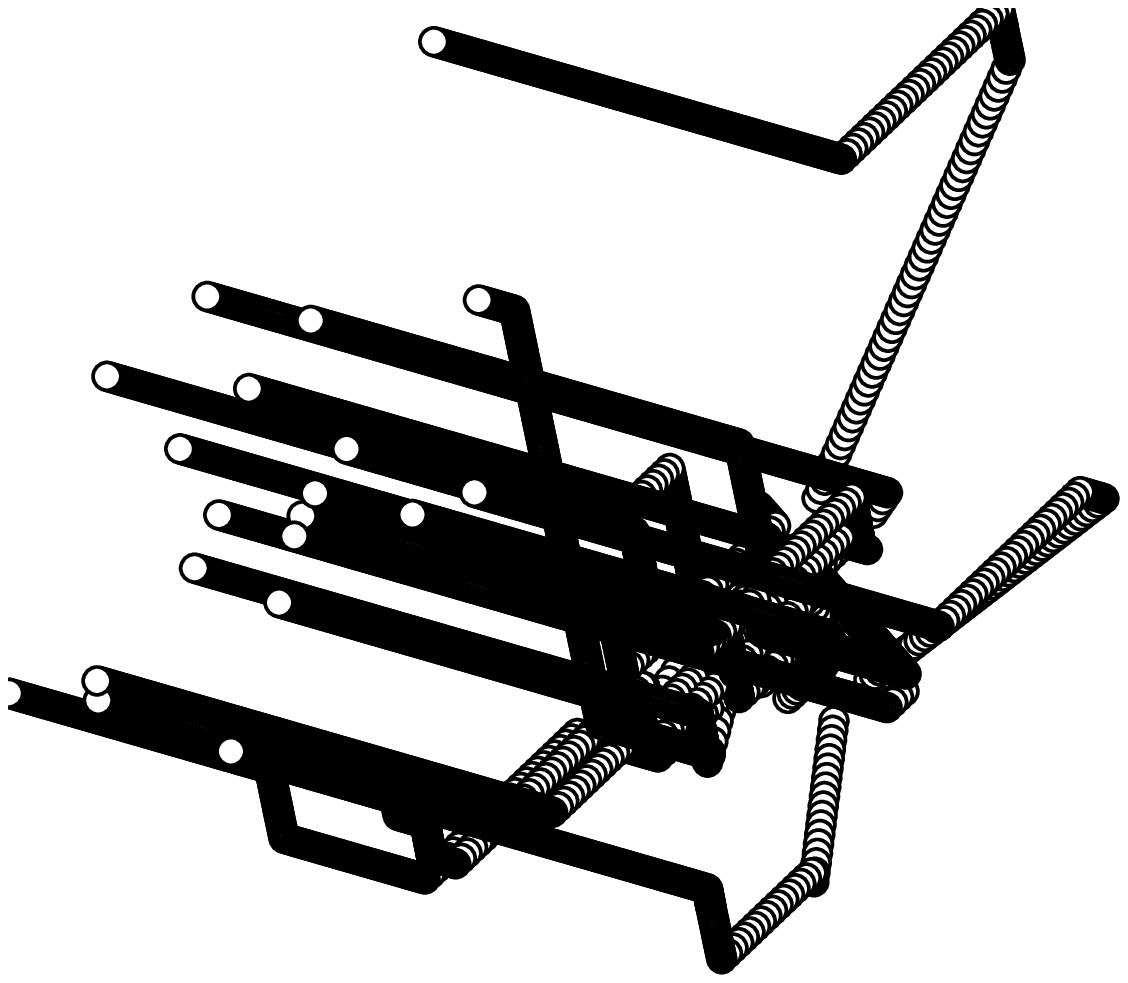,width=6cm}
 \end{minipage} \hfill
\begin{minipage}[t]{.45\linewidth}
\centering\epsfig{figure=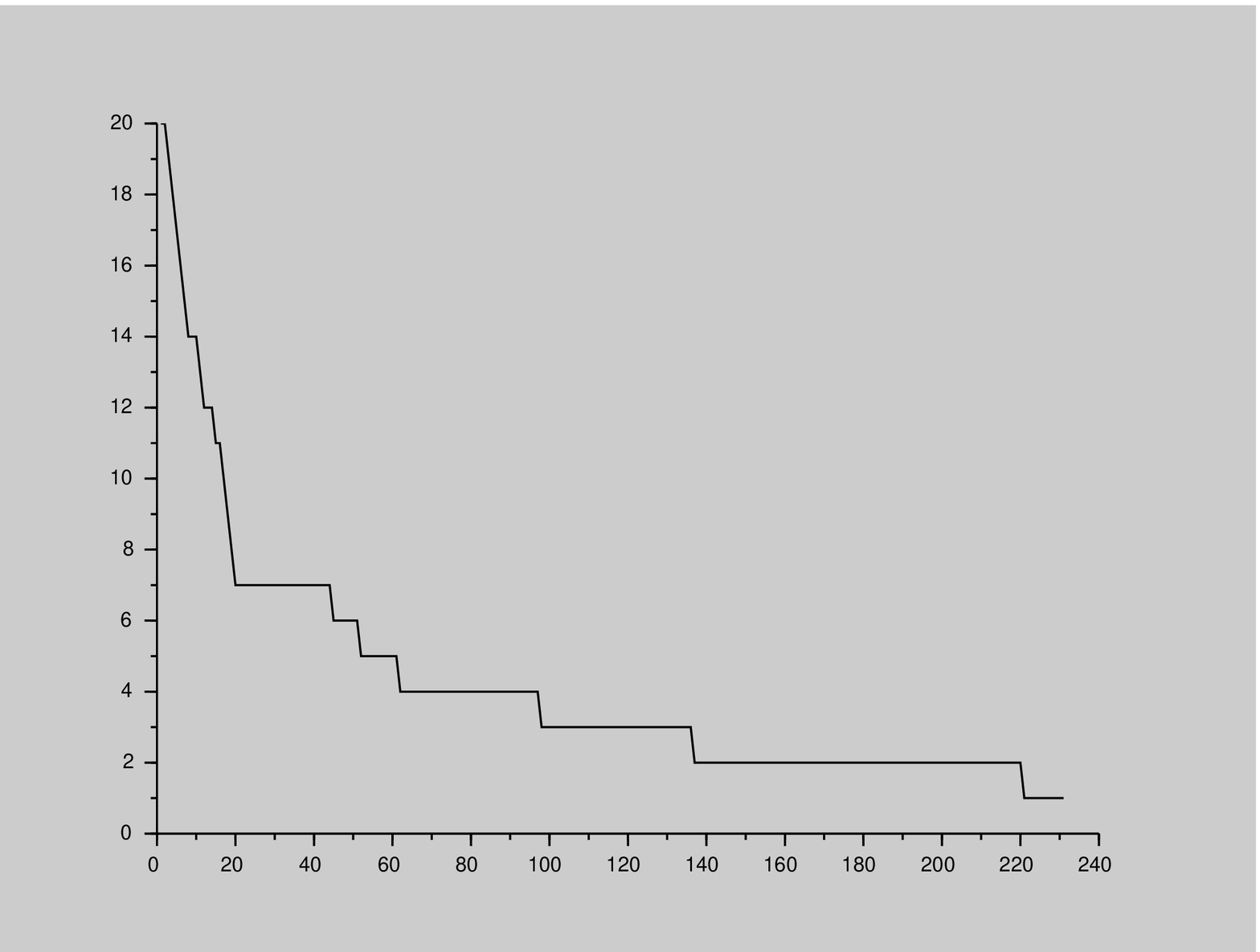,width=6cm}
 \end{minipage} 
\caption{\small Trajectories of the particles on the left, variance and number of different speed as functions of time in the case of 20 particles taken randomly in $[-10,10]$ for the position and for the speed.}\label{fig:20}
\end{figure}
\begin{figure}[h!]
 \begin{minipage}[t]{.45\linewidth}
\centering\epsfig{figure=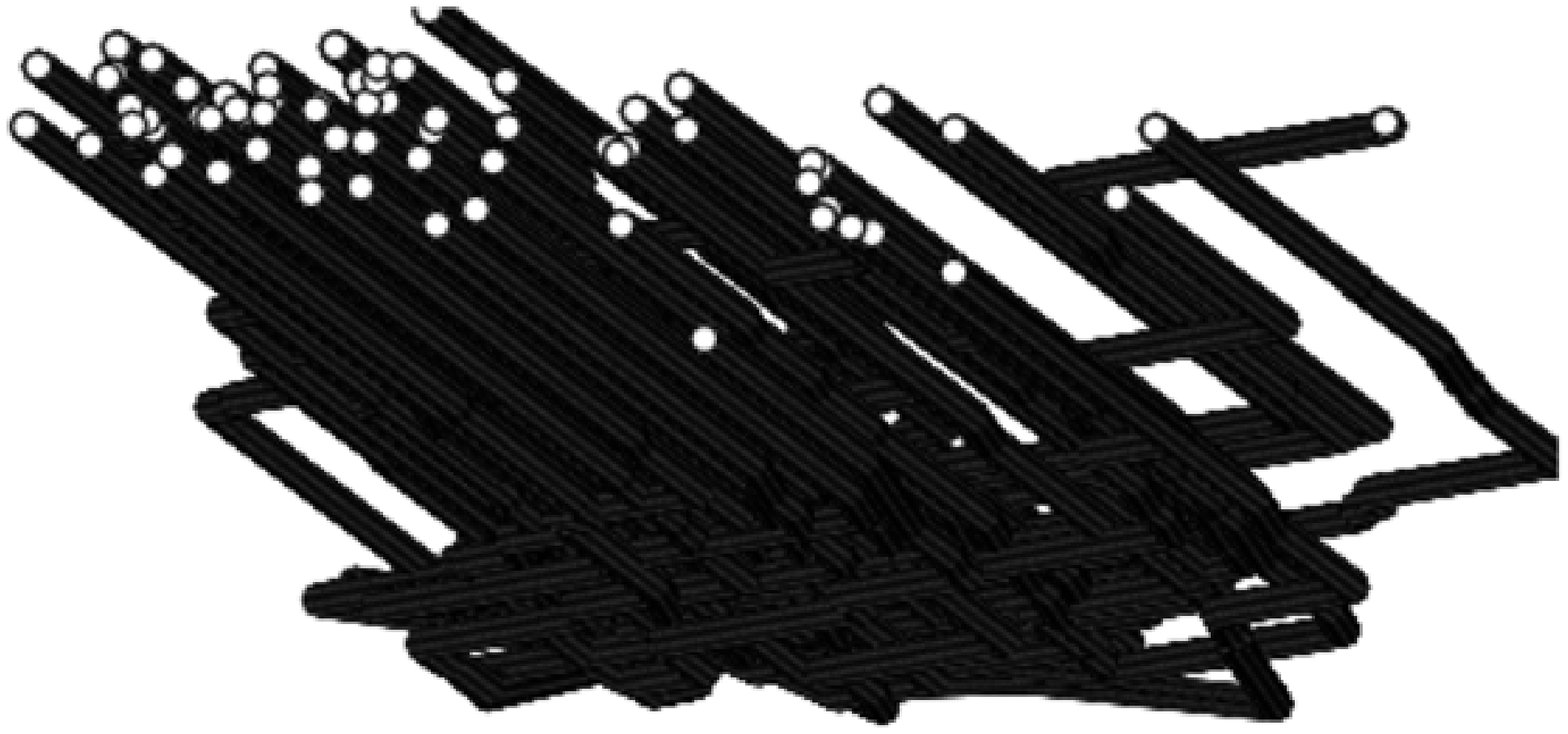,width=6cm}
 \end{minipage} \hfill
\begin{minipage}[t]{.45\linewidth}
\centering\epsfig{figure=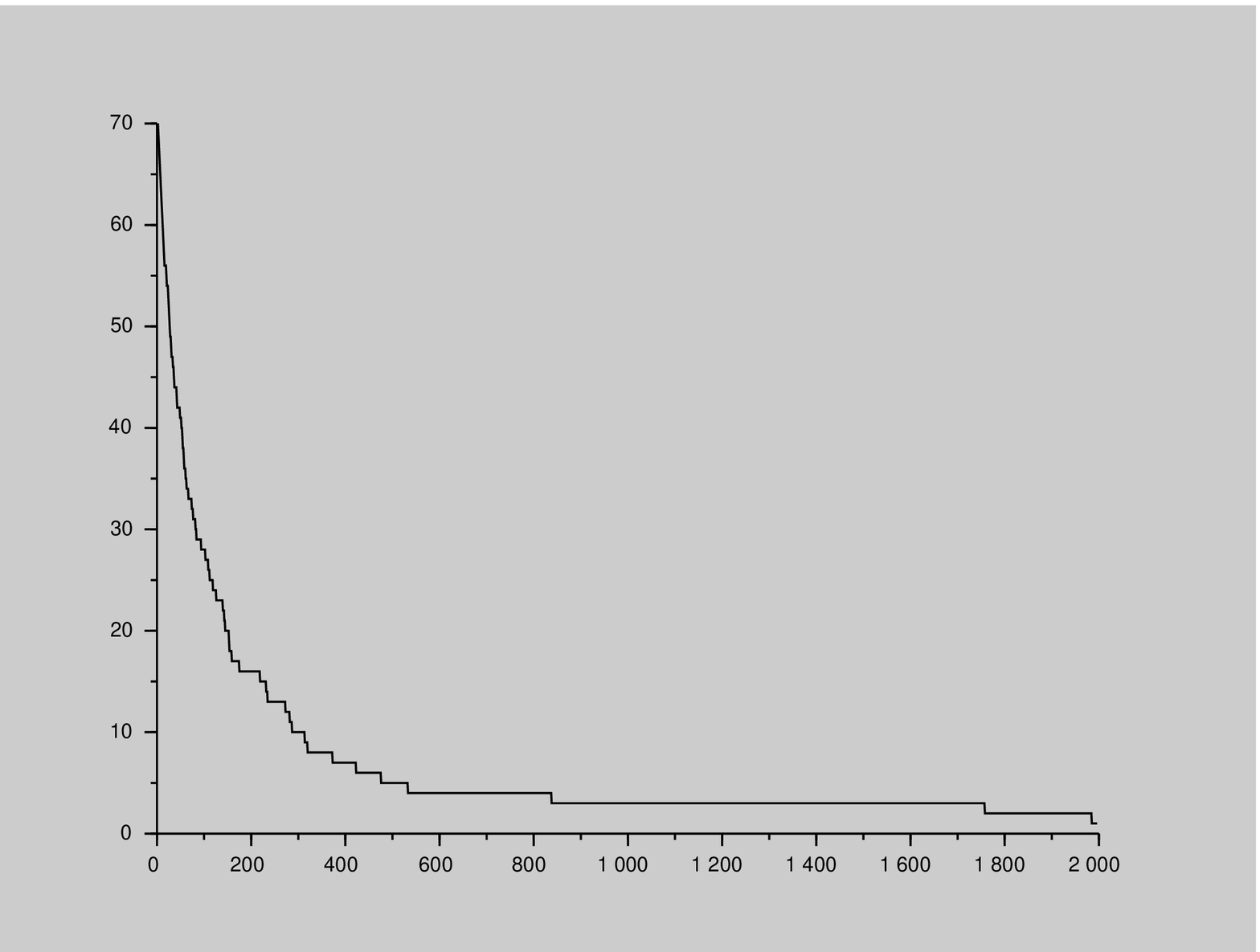,width=6cm}
 \end{minipage} 
\caption{\small Trajectories of the particles on the left, variance and number of different speed as functions of time in the case of 70 particles taken randomly in $[-10,10]$ for the position and for the speed.}\label{fig:70}
\end{figure}

\section{Conclusion}\label{sec:conclusion2}

In this paper, we have investigated a system of particles interacting through leader following interactions where the choice of the leader is determined by a topological rule. Under a propagation of chaos assumption, we have shown that the large system size limit is described by a spatially nonlocal kinetic model of Boltzmann type. This result heavily relies on approximation properties of Bernstein polynomials. Obviously, the very simple leader following model considered in this paper offers many directions of complexification leading to biologically or socially more realistic rules. An example could be the introduction  of some noise, e.g. the velocity after the interaction would be randomly selected according to a probability law centred around the leader velocity. One could also think of the two particles joining their average velocity up to some noise, in the spirit of \cite{BertinDrozGregoire2006}. Finally, binary interactions with the closest neighbour could also be investigated.

\appendix
\section{Fundamental lemma}
\begin{lemma}\label{lem:change}
  For any $H$,
\begin{equation*}
 I(x;r):=\int_{B_r(x)} H\!\left( M_\rho(x,|x'-x|)\right)\rho(x') \dd x' = \int_0^{M_\rho(x,r)} H(p)\dd p\;.
\end{equation*}
\end{lemma}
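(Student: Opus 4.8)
The plan is to pass to polar-type coordinates centred at $x$ and to recognise the partial mass $M_\rho(x,\cdot)$ as the natural radial variable. Fix $x$ and set $s=|x'-x|$. Since the Euclidean distance function $x'\mapsto|x'-x|$ has gradient of modulus $1$, the co-area (layer-cake) formula gives
\[
I(x;r)=\int_{B_r(x)} H\!\left(M_\rho(x,|x'-x|)\right)\rho(x')\dd x'=\int_0^r H\!\left(M_\rho(x,s)\right)\left(\int_{\partial B(x,s)}\rho\dd\mathcal{H}^{n-1}\right)\dd s ,
\]
because on the sphere $\{|x'-x|=s\}$ the argument of $H$ is the constant $M_\rho(x,s)$ and only the factor $\rho(x')$ varies.

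Next I would identify the inner surface integral. Applying the co-area formula once more to $M_\rho(x,s)=\int_{B(x,s)}\rho\dd x'$ shows $M_\rho(x,s)=\int_0^s\big(\int_{\partial B(x,\sigma)}\rho\dd\mathcal{H}^{n-1}\big)\dd\sigma$, so $s\mapsto M_\rho(x,s)$ is nondecreasing and absolutely continuous with $\partial_s M_\rho(x,s)=\int_{\partial B(x,s)}\rho\dd\mathcal{H}^{n-1}$ for almost every $s$. Substituting this into the previous display and then performing the change of variable $p=M_\rho(x,s)$, $\dd p=\partial_s M_\rho(x,s)\dd s$, with $s=0\mapsto p=0$ and $s=r\mapsto p=M_\rho(x,r)$, yields
\[
I(x;r)=\int_0^r H\!\left(M_\rho(x,s)\right)\partial_s M_\rho(x,s)\dd s=\int_0^{M_\rho(x,r)} H(p)\dd p ,
\]
which is the claimed identity.

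The only point requiring a little care is the last change of variable when $s\mapsto M_\rho(x,s)$ fails to be strictly increasing, for instance if $\rho$ vanishes on some spherical shell around $x$, so that a whole interval of radii $s$ maps to a single value of $p$. On such a shell, however, the factor $\partial_s M_\rho(x,s)$ vanishes, hence those radii contribute nothing to the left-hand integral, and the Lebesgue--Stieltjes substitution formula applies verbatim; equivalently one may first prove the identity for $\rho$ smooth and compactly supported (where $M_\rho(x,\cdot)\in\mathcal{C}^1$ and the substitution is classical) and then conclude by a density argument in $L^1$, using that $H$ is bounded on $[0,M_\rho(x,r)]$. This mild technicality is the main obstacle; everything else is routine.
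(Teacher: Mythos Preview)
Your proof is correct and follows essentially the same route as the paper: pass to polar coordinates about $x$, identify the spherical integral $\int_{\partial B(x,s)}\rho\,\mathrm{d}\mathcal{H}^{n-1}$ as $\partial_s M_\rho(x,s)$, and then substitute $p=M_\rho(x,s)$. The only differences are cosmetic---you invoke the co-area formula where the paper writes explicit spherical coordinates---and you are in fact a bit more careful than the paper about the monotone change of variable when $\rho$ may vanish on shells.
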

\begin{proof}
First note that since
\begin{equation*}
  M_\rho(x,s) = \int_{\tilde s<s}\int_{\omega \in \SS^{n-1}} \rho(x+\tilde s\,\omega) \,\tilde s^{n-1}\dd \tilde s\dd \omega, 
\end{equation*}
we have
\begin{equation*}
  \frac{\dd}{\dd s}M_\rho(x,s) = \int_{\omega \in \SS^{n-1}} \rho(x+s\,\omega) \,s^{n-1}\dd \omega.
\end{equation*}

Using the polar change of variables, 
\begin{equation*}
  |x'-x|=:s\qquad \frac{x'-x}{|x'-x|}=:\omega\;,
\end{equation*}
we have
\begin{align*}
 I(x;r) &= \int_{s<r}\int_{\omega \in \SS^{n-1}} \rho(x+s\,\omega) \,H\!\left( M_\rho(x,s)\right)s^{n-1} \dd s\dd \omega\\
  &=\int_{s<r}\frac{\dd}{\dd s}M_\rho(x,s) \,H\!\left( M_\rho(x,s)\right) \dd s\;.
\end{align*}
Setting $p=M_\rho(x,s)$, so that $\dd p=\frac{\dd}{\dd s}M_\rho(x,s) \dd s$, we obtain the stated result.
\end{proof}

\medskip
\noindent{\bf Acknowledgements} This work has been supported by the Engineering and Physical 
Sciences Research Council (EPSRC) under grant ref: 
EP/M006883/1, by the Agence Nationale pour
la Recherche (ANR) under grant MOTIMO (ANR-11-MONU-009-01)  
and by the National Science Foundation (NSF) under grant 
RNMS11-07444 (KI-Net). PD is on leave from CNRS, 
Institut de Math\'ematiques de Toulouse, France. 
PD gratefully acknowledges support from the Royal 
Society and the Wolfson foundation through a Royal 
Society Wolfson Research Merit Award.

\end{document}